\let\Bbb\mathbb
\let\mc\mathcal
\let\euc\mathcal
\let\mathcal\mc
\def\>{\relax\ifmmode\mskip.666667\thinmuskip\relax\else\kern.111111em\fi}
\def\:{\relax\ifmmode\mskip.333333\thinmuskip\relax\else\kern.0555556em\fi}
\def\<{\relax\ifmmode\mskip-.333333\thinmuskip\relax\else\kern-.0555556em\fi}
\def\?{\relax\ifmmode\mskip-.666667\thinmuskip\relax\else\kern-.111111em\fi}
\def\vsk#1>{\vskip#1\baselineskip}
\def\vv#1>{\vadjust{\vsk#1>}\ignorespaces}
\def\vvn#1>{\vadjust{\nobreak\vsk#1>\nobreak}\ignorespaces}
 \let\vp\vphantom \let\hp\hphantom \let\alb\allowbreak
\newdimen\itemid
\def\pitem#1#2){\setbox0\hbox{#1)\enspace\kern\parindent}\itemid\wd0\par
 \hangindent\itemid\indent\hp{b}\llap{#2})\enspace\ignorespaces}
\let\al\alpha
\let\bt\beta
\let\dl\delta
\let\Dl\Delta
\let\eps\varepsilon
\let\la\lambda
\let\phi\varphi
\let\si\sigma
\let\Om\Omega
\let\ge\geqslant
\let\geq\geqslant
\let\le\leqslant
\let\der\partial
\let\ox\otimes
\let\bra\langle
\let\ket\rangle
\let\emptyset\varnothing
\let\on\operatorname
\let\bs\boldsymbol
\def\lsym#1{#1\alb\dots\relax#1\alb}
\def\lc{\lsym,}
\def\lox{\lsym\ox}
\def\ltimes{\lsym\times}
\def\diag{\on{diag\>}}
\def\End{\on{End\>}}
\def\id{\on{id}}
\def\tr{\on{tr}}
\def\GL{\on{GL}}
\def\Gr{\on{Gr}}
\def\C{{\mathbb C}}
\def\R{{\mathbb R}}
\def\Z{{\mathbb Z}}
\def\beq{\begin{equation}}
\def\eeq{\end{equation}}
\def\be{\begin{equation*}}
\def\ee{\end{equation*}}
\def\chii{\chi^{\vp1}}
\def\zbb{\bar{\bs z\:}\<}
\def\zbti{\tilde{\bs z\>}\?}
\def\labt{\tilde{\bs\la\<}\:}
\def\De{\euc D}
\def\Bc{\mc B}
\def\Fc{\mc F}
\def\Fcb{\Fc_{\<\bullet}}
\def\Ae{\euc A}
\let\Deb\De
\newcommand{\gl}{\mathfrak{gl}}
\newcommand\Ref{\eqref}
\newcommand{\bea}{\begin{eqnarray}}
\newcommand{\ena}{\end{eqnarray}}
\def\bel{\begin{eqnarray}}
\def\enl{\end{eqnarray}}
\numberwithin{equation}{section}
\newtheorem{thm}{Theorem}[section]
\newtheorem{cor}[thm]{Corollary}
\newtheorem{lem}[thm]{Lemma}
\newtheorem{prop}[thm]{Proposition}
\numberwithin{equation}{section}
\theoremstyle{definition}
\newtheorem*{rem}{Remark}
\newtheorem*{example}{Example}
\begin{document}

\hrule width0pt
\vsk->

\begin{title}[Lower bounds in real Schubert calculus]
{Lower bounds for numbers of real solutions\\[2pt]
in problems of Schubert calculus}
\end{title}
\author{E.\,Mukhin and V\<.\,Tarasov}
\address{EM: Department of Mathematical Sciences,
Indiana University\,--\>Purdue University\newline
\strut\kern\parindent\hp{ET: }
Indianapolis, 402 N.\,Blackford St., Indianapolis, IN 46202}
\email{mukhin@math.iupui.edu}
\address{VT: Department of Mathematical Sciences,
Indiana University\,--\>Purdue University\newline
\strut\kern\parindent\hp{VT: }
Indianapolis, 402 N.\,Blackford St., Indianapolis, IN 46202, \;and\newline
\strut\kern\parindent\hp{VT: }
St.\,Petersburg Branch of Steklov Mathematical Institute, Fontanka 27,
\newline
\strut\kern\parindent\hp{VT: }
St.\,Petersburg 191023, Russia}
\email{vt@math.iupui.edu, vt@pdmi.ras.ru}

\begin{abstract} We give lower bounds for the numbers of real solutions
in problems appearing in Schubert calculus in the Grassmannian $\Gr(n,d)$
related to osculating flags. It is known that such solutions are related to
Bethe vectors in the Gaudin model associated to \,$\gl_n$. The Gaudin Hamiltonians
are selfadjoint with respect to a nondegenerate indefinite Hermitian form.
Our bound comes from the computation of the signature of that form.
\end{abstract}

\maketitle

\thispagestyle{empty}

\section{Introduction}
It is well known that the problem of finding the number of real solutions
to algebraic systems is very difficult, and not many results are known.
In this paper we address the counting of real points in intersections of
Schubert varieties associated to osculating flags in the Grassmannian of
\,$n$-dimensional planes in a \,$d$-dimensional space. These problems are
parametrized by partitions $\la^{(1)}\lc\la^{(k)}$ and $\nu$
with at most $n$ parts satisfying the condition
\,$|\nu|+\sum_{i=1}^k|\la^{(i)}|=n\>(d-n)$, and distinct complex numbers
$z_1\lc z_k$. In this parametrization, $\la^{(1)}\lc\la^{(k)}$ and $\nu$
are respectively paired with $z_1\lc z_k$ and infinity.

\vsk.1>
Equivalently, we count $n$-dimensional real vector spaces of polynomials that
have ramification points $z_1\lc z_k$ with respective ramification conditions
\,$\la^{(1)}\lc\la^{(k)}$ and are spanned by polynomials of degrees
\,$d-i-\nu_{n+1-i}$, \;$i=1\lc n$, see Section \ref{schubert sec} for details.

\vsk.1>
The same number is obtained by counting real monic monodromy-free
Fuchsian differential operators with singular points $z_1\lc z_k$ and infinity,
exponents \,$\la^{(i)}_n\<,\la^{(i)}_{n-1}\?+1\lc\la^{(i)}_1\?+n-1$
\,at the points \,$z_i$\>, \,$i=1\lc k$, and exponents
\,$\nu_n\<+1-d\:,\nu_{n-1}\<+2-d\:\lc\nu_1\<+n-d$ \,at infinity.

\vsk.2>
The number of complex solutions to the above\:-mentioned algebraic systems
is readily given by the Schubert calculus and equals the multiplicity of
the irreducible \,$\gl_n$-module \,$L_\mu$ of highest weight
$\mu=(d-n-\nu_n,d-n-\nu_{n-1}\lc d-n-\nu_1)$ \>in the tensor product
$L_{\la^{(1)}}\!\ox\dots\ox L_{\la^{(k)}}$ of irreducible \,$\gl_n$-modules
of highest weights $\la^{(1)}\<\lc\la^{(k)}$.

\vsk.2>
The Shapiro\:-Shapiro conjecture proved in \cite{EG1} for \>$n=2$ \>and in
\cite{MTV4} for all \>$n$ \>asserts that if all $z_1\lc z_k$ are real, then
all solutions of the Schubert problem associated to osculating flags are real.
Therefore in this case, the number of real solutions is maximal possible.

\vsk.1>
Next we wonder how many real solutions we can guarantee in other cases.
Clearly for the Schubert problem to have real solutions, the set $z_1\lc z_k$
should be invariant under the complex conjugation and the ramification
conditions at the complex conjugated points should be the same. In this case
we say that the data \,$z_1\lc z_k$, $\la^{(1)}\lc\la^{(k)}$ are invariant
under the complex conjugation. In general, the number of real solutions
is not known, and based on extensive computer experimentation, see \cite{HS},
the answer to this question should be very interesting.

\vsk.2>
Prior to this paper, there were several approaches to obtaining lower bounds.
First, one can compute the real topological degree of the Wronski map, and it
gives bounds for the case when all \>$\la^{(1)}\<\lc\la^{(k)}\<$ are one\:-box
partitions, see \cite{EG2}. The lower bound can be extended to the case when
all but one partitions consist of one box, see \cite{SS}. While this method
gives nontrivial bounds, it has several serious drawbacks --- the answer does
not depend on the number of real points among \> $z_1\lc z_k$, does not apply
to general ramification conditions, and is far from being sharp in many cases.

\vsk.1>
Another method is to consider parity conditions. It is proved in \cite{HSZ}
that if all partitions are symmetric, the number of solutions can change only
by $4$. Unfortunately, this is also a very special situation and the only lower
bound one can obtain this way is $2$. Finally, in some cases, see Theorem~7
in \cite{HHS}, the required spaces of polynomials can be described relatively
explicitly to estimate the number of solutions. This estimate is sharp, that
is, it is attained for some choice of \> $z_1\lc z_k$, but it works only in
very special situations and cannot be possibly extended.

\vsk.2>
We propose one more way to attack the problem. The proof of the
Shapiro\:-Shapiro conjecture in \cite{MTV3}, \cite{MTV4} is based on the
identification of the spaces of polynomials with points of spectrum of
a remarkable family of commuting linear operators known as higher Gaudin
Hamiltonians. For real \>$z_1\lc z_k$, these operators are selfadjoint with
respect to a positive definite Hermitian form, and hence have real eigenvalues.
Eventually, this shows that the spaces of polynomials with real ramification
points are real.

\vsk.1>
If some of \>$z_1\lc z_k$ are not real, but the data $z_1\lc z_k$,
\>$\la^{(1)}\<\lc\la^{(k)}$ are invariant under the complex conjugation,
the higher Gaudin Hamiltonians are selfadjoint with respect to a nondegenerate
Hermitian form, but this form is indefinite. Since the number of real
eigenvalues of such operators is at least the absolute value of the signature
of the Hermitian form, see Lemma~\ref{linalg}, this gives a lower bound
for the number of real solutions to the Schubert problem in question.

\vsk.1>
We reduce the computation of the signature of the form to the computation of
values of characters of products of symmetric groups on products of commuting
transpositions. There is a formula for such characters, see
Proposition~\ref{char prop}, similar to the Frobenius formula~\cite{F}.
Thus, we obtain a lower bound for all possible choices of partitions
\>$\la^{(1)}\<\lc\la^{(k)}$ and \>$\nu$, and the obtained bound depends on
the number of real points among \>$z_1\lc z_k$, see Corollary~\ref{bound thm}.

\vsk.2>
We check the obtained lower bound against the available results and computer
experiments, see Section~\ref{compare sec}. We find that our bound is sharp
in many cases. For example, all available data for $n=2$ match our bound.
However, our bound is not sharp in general. We hope that the bound can be
improved in some cases by modifying the Hermitian form given in this paper so
that higher Gaudin Hamiltonians remain selfadjoint relative to the new form.

\vsk.2>
The paper is organized as follows. We start with computations of characters
of symmetric groups in Section \ref{char sec}, see Proposition \ref{char prop}.
Then we prepare notation and definitions for osculating Schubert calculus in
Section \ref{schubert sec}. We recall definitions and properties of higher
Gaudin Hamiltonians in Section \ref{gaudin sec} and their symmetries in
Section \ref{shap sec}. We discuss the key facts from linear algebra
about selfadjoint operators with respect to indefinite Hermitian form in
Section \ref{lin alg sec}. In Section \ref{bound sec} we prove our main
statement, see Theorem \ref{sgn} and Corollary \ref{bound thm}.
In Section \ref{compare sec} we compare our bounds with known data and results.

\newpage
\section{Characters of the symmetric groups}\label{char sec}
The study of characters of the symmetric groups is a classical subject which
goes back to Frobenius \cite{F}. In this section we deduce a formula for
characters of a product of the symmetric groups appearing in a tensor
product of irreducible \,$\gl_n$-modules.

\vsk.2>
Let $S_k$ be the group of all permutations of a $k$-element set,
\,$\GL_n$ be the group of all nondegenerate $n\times n$ matrices, and
\,$\gl_n$ be the Lie algebra of $n\times n$ matrices.

\vsk.1>
Let $\la=(\la_1,\la_2\lc\la_n)$ be a partition with at most $n$ parts,
\vvn.1>
$\la_1\geq \la_2\geq \dots\geq \la_n\geq 0$. We use the notation
$|\la|=\sum_{i=1}^n\la_i$.

\vsk.1>
For each partition $\la$ with at most $n$ parts, denote by $L_\la$ the
irreducible finite-dimensional \,$\gl_n$-module of highest weight $\la$.
We call the module corresponding to $\la=(1,0\lc 0)$ the vector representation.

Let
\vvn-.4>
\beq
\label{Delta} \Dl_n=\prod_{i,j=1,\ i>j}^n(x_i-x_j)=\det(x_i^{n-j})_{i,j=1}^n\in\C[x_1\lc x_n]
\vv.2>
\eeq
be the Vandermonde determinant.
\vvn.5>
Let $S_\la\in\C[x_1\lc x_n]$ be the Schur polynomial given by
\beq
\label{Schur}
S_\la(x_1\lc x_n)=\frac{\det(x_i^{\la_j+n-j})_{i,j=1}^n}{\Dl_n}.
\vv.3>
\eeq
The Schur polynomial is a symmetric polynomial in \>$x_1\lc x_n$.
It is well known that the character of the module \>$L_\la$ is given
by the Schur polynomial:
\vvn.2>
\be
S_\la(x_1\lc x_n)=\tr_{L_\la}\!X,
\ee
where $X=\diag(x_1\lc x_n)\in {\GL}_n$.

\vsk.2>
Consider the tensor product of \,$\gl_n\<$-modules:
\vvn.2>
\beq
\label{tensor prod}
L_{\bs\la}=L_{\la^{(1)}}^{\ox k_1}\ox L_{\la^{(2)}}^{\ox k_2}\lox
L_{\la^{(s)}}^{\ox k_s}
\vv.2>
\eeq
and its decomposition into irreducible \,$\gl_n\<$-submodules:
\vvn.2>
\beq
\label{Mlm}
L_{\bs\la}=\,\bigoplus_\mu\,L_\mu\ox M_{\bs\la,\:\mu}\,.
\vv-.2>
\eeq
Notice that the multiplicity space $M_{\bs\la,\:\mu}$ is trivial unless
\vvn-.2>
\beq
\label{weight cond}
|\:\mu\:|\,=\,\sum_{i=1}^s\,k_i\>|\la^{(i)}|\,.
\eeq
The product of symmetric groups
\;$S_{\bs k}=S_{k_1}\!\times S_{k_2}\!\ltimes S_{k_s}$
acts on $L_{\bs\la}$ by permuting the corresponding tensor factors.
Since the \,$S_{\bs k}$-action commutes with the \,$\gl_n\<$-action, the group
$S_{\bs k}$ acts on the multiplicity space $M_{\bs\la,\:\mu}$ for all $\mu$.
If $s=1$ and all tensor factors are vector representations,
$\la^{(1)}=(1,0\lc 0)$, by the Schur\:-Weyl duality, the space
$M_{\bs\la,\:\mu}$ is the irreducible representation of \,$S_{k_1}\!$
corresponding to the partition \>$\mu$. In general, $M_{\bs\la,\:\mu}$ is
a reducible representation of $S_{\bs k}$.

\vsk.2>
For \,$\si=\si_1\times\si_2\times\dots\times\si_s\in S_{\bs k}$,
\;$\si_i\in S_{k_i}$, let
\,$\chii_{\bs\la,\:\mu}(\si)=\tr_{M_{\bs\la,\:\mu}}\!\si$
\vvn.1>
\,be the value of the character of \,$S_{\bs k}$ corresponding to
the representation $M_{\bs\la,\:\mu}$ on \,$\si$. Writing \,$\si_i$ as
a product of disjoint cycles, denote by \>$c_i$ the number of cycles in
the product and by \,$l_{ij}$, \,$j=1\lc c_i$, the lengths of cycles.
We have \,$l_{i,1}\lsym+l_{i,c_i}\<=\:k_i$.

\begin{prop}\label{char prop}
The character value \,$\chii_{\bs\la,\:\mu}(\si)$ equals the coefficient
of the monomial\\ $x_1^{\:\mu_1+n-1}x_2^{\:\mu_2+n-2}\?\dots x_n^{\:\mu_n}$
in the polynomial
\be
\Dl_n\cdot\>\prod_{i=1}^s\,\prod_{j=1}^{c_i}\,
S_{\la^{(i)}}(x_1^{l_{ij}}\?\lc x_n^{l_{ij}})\,.
\ee
\end{prop}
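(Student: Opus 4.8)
The plan is to compute the character value $\chii_{\bs\la,\:\mu}(\si)$ via a standard Schur--Weyl/generating-function argument using the commuting $\gl_n$- and $S_{\bs k}$-actions. First I would introduce the element $X=\diag(x_1\lc x_n)\in\GL_n$ and consider the operator $\si\cdot X$ acting on $L_{\bs\la}$, where $\si$ acts by permuting tensor factors and $X$ acts diagonally through the $\gl_n$-action. Taking the trace of $\si\cdot X$ over $L_{\bs\la}$ and using the decomposition \Ref{Mlm}, since the two actions commute one gets
\be
\tr_{L_{\bs\la}}(\si\cdot X)\;=\;\sum_\mu\,\tr_{L_\mu}\!X\;\cdot\;\tr_{M_{\bs\la,\:\mu}}\!\si\;=\;\sum_\mu\,S_\mu(x_1\lc x_n)\;\chii_{\bs\la,\:\mu}(\si)\,.
\ee
So $\chii_{\bs\la,\:\mu}(\si)$ is exactly the coefficient of $S_\mu$ when $\tr_{L_{\bs\la}}(\si\cdot X)$ is expanded in the basis of Schur polynomials.

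Next I would compute the left-hand side directly. On each tensor power $L_{\la^{(i)}}^{\ox k_i}$ the permutation $\si_i$ decomposes into disjoint cycles of lengths $l_{i1}\lc l_{i,c_i}$, and the trace of $\si_i\cdot X^{\ox k_i}$ over that tensor power factors over the cycles. For a single cycle of length $l$ acting on $V^{\ox l}$, a classical computation gives $\tr_{V^{\ox l}}((\text{$l$-cycle})\cdot X^{\ox l})=\tr_V(X^l)$; applied here with $V=L_{\la^{(i)}}$ this yields $\tr_{L_{\la^{(i)}}}(X^{l_{ij}})=S_{\la^{(i)}}(x_1^{l_{ij}}\lc x_n^{l_{ij}})$. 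Multiplying over all cycles and all $i$, together with the fact that the trace over a tensor product of factors is the product of traces,
\be
\tr_{L_{\bs\la}}(\si\cdot X)\;=\;\prod_{i=1}^s\,\prod_{j=1}^{c_i}\,S_{\la^{(i)}}(x_1^{l_{ij}}\?\lc x_n^{l_{ij}})\,.
\ee

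Finally I would convert "coefficient of $S_\mu$" into "coefficient of a monomial" by multiplying by the Vandermonde. From \Ref{Schur}, $\Dl_n\cdot S_\mu=\det(x_i^{\mu_j+n-j})$ is the antisymmetrization of $x_1^{\mu_1+n-1}\cdots x_n^{\mu_n}$. Since the product $\prod_{i,j}S_{\la^{(i)}}(x_1^{l_{ij}}\lc x_n^{l_{ij}})$ is symmetric in $x_1\lc x_n$, the polynomial $\Dl_n\cdot\prod_{i,j}S_{\la^{(i)}}(\dots)$ is antisymmetric, hence a linear combination of the determinants $\det(x_i^{\mu_j+n-j})$ over partitions $\mu$, with the same coefficients $\chii_{\bs\la,\:\mu}(\si)$. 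Extracting the coefficient of the leading monomial $x_1^{\mu_1+n-1}x_2^{\mu_2+n-2}\cdots x_n^{\mu_n}$ from each such determinant picks out exactly the identity permutation term, with coefficient $1$, so that coefficient of this monomial in $\Dl_n\cdot\prod_{i,j}S_{\la^{(i)}}(x_1^{l_{ij}}\lc x_n^{l_{ij}})$ equals $\chii_{\bs\la,\:\mu}(\si)$, as claimed.

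The only genuinely non-routine point is the single-cycle trace identity $\tr_{V^{\ox l}}((\text{$l$-cycle})\cdot X^{\ox l})=\tr_V(X^l)$ and, more precisely, making sure the permutation and diagonal actions are composed in a way that this holds; once that is in place, everything else is bookkeeping with Schur polynomials and the antisymmetrization isomorphism. One should also note that the argument is purely at the level of $\gl_n$-characters, so there is no positivity or reality issue; the proposition holds for all $n$ and all partitions satisfying \Ref{weight cond} (and the coefficient vanishes automatically when \Ref{weight cond} fails, consistently with $M_{\bs\la,\:\mu}$ being trivial there).
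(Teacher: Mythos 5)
Your proposal is correct and follows essentially the same route as the paper: compute $\tr_{L_{\bs\la}}(X\times\si)$ in two ways (via the $L_\mu\ox M_{\bs\la,\mu}$ decomposition and via the cycle decomposition of $\si$), then extract the monomial coefficient by multiplying by $\Dl_n$. The one step you label ``classical'' without proof, $\tr_{V^{\ox l}}\bigl((l\text{-cycle})\cdot X^{\ox l}\bigr)=\tr_V(X^l)$, is the part the paper actually proves, using the identity $(\id\ox\tr_V)\bigl((A\ox B)\>P\bigr)=AB$ for the flip $P$ together with the factorization of an $l$-cycle into adjacent transpositions.
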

\begin{proof}
Let \>$V$ be a vector space, \,$P\in\End(V\?\ox V)$ be the flip map,
and \,$A,B\in\End(V)$.
Then \,$(\id\ox\tr_V)\>\bigl((A\ox B)\>P\:\bigr)=A\:B\in\End(V)$\,.

Let $\si=(1\:2\dots l)$ be a cycle permutation and
\,$X=\diag(x_1\lc x_n)\in\GL_n$\>. Using the presentation
$\si=(1\:2)\>(2\:3)\dots(\:l-1\;l\:)$\>, \,we get
\beq
\label{cycle acts}
\tr_{L_\la^{\ox l}}(X\<\times\si)\,=\,\tr_{L_\la}(X^l)\,=\,
S_\la(x_1^l\lc x_n^l)\,.
\eeq

For any \,$\si\in S_{\bs k}$ and \>$X\<\in\GL_n$, formulae \Ref{tensor prod}
and \Ref{cycle acts} yield
\be
\tr_{L_{\bs\la}}(X\<\times\si)\,=\,
\prod_{i=1}^s\,\prod_{j=1}^{c_i}\,S_{\la^{(i)}}(x_1^{l_{ij}}\lc x_n^{l_{ij}})\,,
\vv-.2>
\ee
and formulae \Ref{Mlm} and \Ref{Schur} give
\be
\tr_{L_{\bs\la}}(X\<\times\si)\,=\,\sum_\mu\,
\chii_{\bs\la,\:\mu}(\si)\,S_\mu(x_1\lc x_n)\,=\,
\frac1{\Dl_n\!}\>\sum_\mu\,\chii_{\bs\la,\:\mu}(\si)\,
\det(x_i^{\mu_j+n-j})_{i,j=1}^n\,.
\ee
The proposition follows.
\end{proof}

For the case of vector representations: $s=1$, \,$\la^{(1)}=(1,0\lc 0)$,
the Schur polynomial is
\,$S_{\la^{(1)}}(x_1\lc x_n)=x_1+x_2+\dots+x_n$ and Proposition~\ref{char prop}
reduces to the famous Frobenius formula \cite{F} for characters of irreducible
representations of the symmetric group.

\section{Osculating Schubert calculus}\label{schubert sec}
In this section we recall the problem of computing intersections of Schubert
varieties corresponding to osculating flags.

\vsk.2>
Let $n,d$ be positive integers such that $d>n$. Let \>$V$ be
a \,$d$-dimensional complex vector space. We realize \>$V$ as the space of
polynomials in a variable $x$ of degree less than $d$: \>$V=\C_d[x]$.
The Grassmannian $\Gr(n,d)$ of \,$n$-dimensional planes in \>$V$ is a smooth
projective variety of dimension $n(d-n)$.

\vsk.2>
For $z\in\C$ we define a full flag $\Fcb(z)$ in \>$V$ as follows:
\vvn.3>
\be
\Fcb(z)\,=\,\{\>\Fc_1(z)\subset\Fc_2(z)\subset\ldots\subset\Fc_{d-1}(z)\subset
\Fc_d(z)=V\:\}\,,
\vv.3>
\ee
where \,$\Fc_i(z)=(x-z)^{d-i}\>\C_i[x]$ \,is the subspace of polynomials
vanishing at $z$ to the order at least \,$d-i$\>. Clearly, \,$\Fc_i(z)$ \>has
a basis \,$(x-z)^{d-i}\lc(x-z)^{d-1}$ \,and \,$\dim\Fc_i(z)=i$\,.
We also define a full flag
$\Fcb(\infty)\,=\,\{\>\Fc_1(\infty)\subset\Fc_2(\infty)\subset\dots\subset
\Fc_{d-1}(\infty)\subset\Fc_d(\infty)=V\:\}$\,, \,where
\,$\Fc_i(\infty)\>=\>\C_i[x]$ is the subspace of polynomials of degree less
than $i$. The subspace $\Fc_i(\infty)$ has a basis \,$1,x\lc x^{i-1}$.

\vsk.2>
Given \,$z\in\C\cup\{\infty\}$ \,and a partition \>$\la$ \>with at most $n$
parts, the corresponding Schubert variety is
\be
\Om_\la(z)\,=\,\{\>W\in\Gr(n,d)\ |\ \dim W\cap\Fc_{d-\la_{n-i}-i}(z)\geq n-i\,,
\quad i=0\lc n-1\>\}\,.
\vv.2>
\ee
The Schubert variety \,$\Om_\la(z)\subset\Gr(n,d)$ has codimension \>$|\la|$\>.

\vsk.2>
Given partitions \>$\la^{(1)}\<\lc\la^{(k)}$ and \>$\nu$ with at most $n$ parts
such that
\beq
\label{nu}
|\nu|+\sum_{i=1}^k |\la^{(i)}|\,=\,n(d-n)\,,
\eeq
and distinct complex numbers \>$z_1\lc z_k$\>, the corresponding osculating
\vvn.1>
Schubert problem asks to find the intersection of Schubert varieties
\vvn.2>
\beq
\label{Omlnz}
\Om(\bs\la\:,\nu,\bs z)\,=\,
\bigcap_{i=1}^k\;\Om_{\la^{(i)}}(z_i)\,\cap\,\Om_{\nu}(\infty)\,.
\eeq
This intersection consists of \,$n$-dimensional spaces of polynomials
\>$W\<\subset V$ \>such that

\pitem ba)
the space \>$W$ has a basis $f_{1,0}\lc f_{n,0}$ \>such that
$\deg f_{j,0}=d-i-\nu_{n+1-i}$\>, \>and

\pitem bb)
for each $i=1\lc k$\>, the space \>$W$ has a basis $f_{1,i}\lc f_{n,i}$ such
that $f_{j,i}$ has a root at \>$z_i$ of order exactly \>$\la_{n+1-j}+j-1$\>.

\vsk.1>
According to Schubert calculus, the set \,$\Om(\bs\la\:,\nu,\bs z)$
\>is finite, and the number \,$m(\bs\la\:,\nu)$ of complex points
in \,$\Om(\bs\la\:,\nu,\bs z)$ \>counted with multiplicities equals
the multiplicity of the irreducible \,$\gl_n$-module \>$L_\mu$ in the tensor
product \>$L_{\la^{(1)}}\lox L_{\la^{(k)}}$, where the partition \>$\mu$ is
the complement of \>$\nu$ in the $n\times (d-n)$ rectangle:
\vvn.3>
\beq
\label{mu nu}
\mu\,=\,(d-n-\nu_n\>,\>d-n-\nu_{n-1}\>\lc\>d-n-\nu_1)\,.
\vv.2>
\eeq

It is known that for generic complex \>$z_1\lc z_k$\>, all points of
intersection are multiplicity-free. Moreover, for distinct real
\>$z_1\lc z_k$\>, all points of intersection are multiplicity-free as well,
and all the corresponding spaces of polynomials are real, see \cite{MTV4}.
That is, for distinct real $z_1\lc z_k$ the osculating Schubert problem has
\,$m(\bs\la\:,\nu)$ \>real solutions.

\vsk.2>
Let us make two pertinent remarks. First, notice that
\,$m(\bs\la\:,\nu)=m(\labt,\emptyset)$\>, where \,$\labt$
\>is the \,$(k+1)$-tuple \,$\la^{(1)}\lc\la^{(k)},\:\nu$ and
\,$\emptyset=(0\lc 0)$ \>is the empty partition.

\vsk.1>
Second, fix partitions \,$\la^{(1)}\<\lc\la^{(k)}$ and \>$\mu$ such that
\vvn.1>
\,$|\:\mu\:|=\sum_{i=1}^k|\la^{(i)}|$\,, take \,$d\ge n+\mu_1$, \>and set
\vvn-.2>
\beq
\label{numu}
\nu\>=\,(d-n-\mu_n\>\lc\>d-n-\mu_1)\,.
\vv.2>
\eeq
Then the spaces of polynomials that are points of \,$\Om(\bs\la\:,\nu,\bs z)$
\>do not depend on \,$d$.

\section{Gaudin model}
\label{gaudin sec}

{\baselineskip1.1\baselineskip
Let \>$E_{ij}$\>, \,$i,j=1\lc n$\>, \,be the standard basis of \,$\gl_n$\>:
\,$[\:E_{ij}\:,E_{k\:l}]\:=\:\dl_{jk}\>E_{i\:l}-\dl_{i\:l}\>E_{kj}$\,.
The current Lie algebra \,$\gl_n[t]$ \,is spanned by the elements
$E_{ij}\<\ox t^{\:r}$, \,$i,j=1\lc n$\>, \,$r\in\Z_{\geq 0}$\>,
\,satisfying the relations
\,$[\:E_{ij}\<\ox t^{\:r}\<,E_{k\:l}\<\ox t^s\:]\:=\:
\dl_{jk}\>E_{i\:l}\<\ox t^{\:r+s}\?-\dl_{i\:l}\>E_{kj}\<\ox t^{\:r+s}$.
We identify \,$\gl_n$ with the subalgebra in \,$\gl_n[t]$ by the rule
$E_{ij}\mapsto E_{ij}\<\ox 1$\>, \,$i,j=1\lc n$.

\vsk.1>
Given \>$z\in\C$\>, define the evaluation homomorphism
\;$\eps_z\?:\gl_n[t]\to\:\gl_n$\>,
\,$E_{ij}\<\ox t^{\:r}\<\mapsto E_{ij}\>z^r$.
For a \,$\gl_n$-module \>$L$\>, the evaluation \,$\gl_n[t]$-module \>$L(z)$
is the pull-back of \>$L$ through the evaluation homomorphism \,$\eps_z$.

For \,$g\in\gl_n$\>, define the formal power series in $x^{-1}$:
$g(x)=\sum_{s=0}^\infty (\:g\ox t^s)\>x^{-s-1}$. The series $g(x)$ acts
in the evaluation module \>$L(z)$ as \,$g\>(x-z)^{-1}$.
\par}

\vsk.2>
Let \,$\der_x$ be the differentiation with respect to \>$x$.
Set \,$X_{ij}=\:\dl_{ij}\>\der_x-\:E_{ij}(x)$, \,$i,j=1\lc n$\>.
Define the formal differential operator \,$\Deb$ \,by the rule
\beq
\label{gen}
\Deb\,=\>\sum_{\si\in S_n\!}\,
X_{\si(1),\:1}\>X_{\si(2),\>2}\dots X_{\si(n),\>n}\,=\,
\der_x^{\>n}+\sum_{i=1}^n\,\sum_{j=i}^\infty\,B_{ij}\,x^{-j}\,\der_x^{\>n-i}\,,
\eeq
where \,$B_{ij}$ \,are elements of the universal enveloping algebra
\,$U(\:\gl_n[t])$. The operator \,$\Deb$ \,is called the universal operator.

\vsk.2>
The unital subalgebra of \,$U(\gl_n[t])$ \,generated by \>$B_{ij}$,
\,$i=1\lc n$, \,$j\in\Z_{\geq\:i}$\>, \,is called the Bethe subalgebra and
denoted by \,$\Bc_n$. Also, \,$\Bc_n$ \>is called the algebra of higher
Gaudin Hamiltonians.

\begin{prop}[\cite{T}]
\label{bethe}
The subalgebra \,$\Bc_n$ is commutative and commutes with \,$\gl_n$.
\qed
\end{prop}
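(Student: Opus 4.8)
The plan is to realize the universal operator $\Deb$ as the column determinant $\operatorname{cdet}(\der_x\cdot I-E(x))$ of the matrix differential operator $X=(X_{ij})_{i,j=1}^n$ with $E(x)=(E_{ij}(x))_{i,j=1}^n$, and then to run the standard ``quantum spectral curve'' argument. First I would record the commutation relations among the entries $X_{ij}=\dl_{ij}\,\der_x-E_{ij}(x)$. From the relations of $\gl_n[t]$, the identity $[\:\der_x,E_{ij}(x)\:]=E_{ij}'(x)$ (the coefficientwise $x$-derivative of $E_{ij}(x)$), and the elementary formula $[\:g(x)\:,h(y)\:]=\bigl([g,h](x)-[g,h](y)\bigr)/(y-x)$, valid for $g,h\in\gl_n$, one checks that the entries in each column of $X$ commute and that the cross relation $[\:X_{ij},X_{kl}\:]=[\:X_{kj},X_{il}\:]$ holds --- the $\der_x$ contributions being exactly what makes these identities close up. Thus $X$ is a Manin matrix, and therefore $\operatorname{cdet}X=\Deb$ obeys the usual determinantal calculus: Laplace expansion along any row or column, invariance under adding a left multiple of one row to another (or a right multiple of one column to another), and invariance under conjugating $X$ by a constant invertible matrix. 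This step is pure bookkeeping.

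For the commutativity of $\Bc_n$, introduce the Lax operator $L(x)=\sum_{i,j=1}^n E_{ij}(x)\ox e_{ij}\in U(\gl_n[t])\ox\End(\C^n)$, with $e_{ij}$ the matrix units of $\End(\C^n)$, whose matrix form has $(i,j)$ entry $E_{ij}(x)$, so that $\Deb=\operatorname{cdet}(\der_x-L(x))$. The formula for $[\:g(x)\:,h(y)\:]$ above repackages as a classical $r$-matrix relation of Gaudin type for $L(x)$, with $r$-matrix $P/(x-y)$, where $P$ is the permutation operator on $\C^n\ox\C^n$. Letting $\Deb_x$ and $\Deb_y$ denote the universal operator written in two independent variables, and using that the coefficients $B_{ij}$ commute with $x,y,\der_x,\der_y$, one has
\be
[\:\Deb_x\:,\,\Deb_y\:]\;=\;\sum_{i,j}\,\sum_{k,l}\,[\:B_{ij}\:,\,B_{kl}\:]\;x^{-j}\,y^{-l}\,\der_x^{\,n-i}\,\der_y^{\,n-k}\,,
\ee
so it suffices to prove $[\:\Deb_x,\Deb_y\:]=0$, which follows by the computation of \cite{T} combining the $r$-matrix relation with the determinantal calculus of the first step. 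Hence all the $B_{ij}$ pairwise commute, and $\Bc_n$ is commutative.

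For the assertion that $\Bc_n$ commutes with $\gl_n\subset U(\gl_n[t])$, a direct computation --- in which the $\der_x$ terms cancel --- gives $[\:E_{ab}\ox 1\:,X_{ij}\:]=\dl_{bi}\,X_{aj}-\dl_{aj}\,X_{ib}$; that is, $\ad(E_{ab}\ox 1)$ acts on the matrix $X$ as the infinitesimal conjugation $X\mapsto[\,e\,,X\,]$ by a constant matrix unit $e$. Since the column determinant of a Manin matrix is invariant under conjugation by a constant invertible matrix, it is killed by every such infinitesimal conjugation, so $[\:E_{ab}\ox 1\:,\Deb\:]=0$. Because $E_{ab}\ox 1$ commutes with $\der_x$ and with the scalars $x^{-j}$, expanding $\Deb$ as in \eqref{gen} then forces $[\:E_{ab}\ox 1\:,B_{ij}\:]=0$ for all $i,j$, so $\Bc_n$ commutes with $\gl_n$.

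The main obstacle is carrying out the first two steps honestly at the level of formal differential operators: the Manin-matrix calculus --- Laplace expansion, conjugation invariance of $\operatorname{cdet}$, and above all the $r$-matrix cancellation that yields $[\:\Deb_x,\Deb_y\:]=0$ --- is normally set up for matrices over an associative ring, whereas here every algebraic step has to be carried through together with the correction terms produced by $[\:\der_x,E_{ij}(x)\:]=E_{ij}'(x)$. Once that differential version of the formalism is in place, the commutativity computation is the classical one of \cite{T}, and the statement that $\Bc_n$ commutes with $\gl_n$ follows at once.
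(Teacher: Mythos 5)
The paper proves this proposition only by citing \cite{T}, and your proposal is a faithful reconstruction of the Talalaev/Chervov--Falqui Manin-matrix argument underlying that reference: the column-wise commutativity, the cross relations $[X_{ij},X_{kl}]=[X_{kj},X_{il}]$, and the infinitesimal-conjugation identity $[E_{ab}\ox 1,X_{ij}]=\dl_{bi}X_{aj}-\dl_{aj}X_{ib}$ (with the $\der_x$ terms cancelling) all check out exactly as you state, and the $\gl_n$-equivariance step via $\operatorname{cdet}(gXg^{-1})=\operatorname{cdet}(X)$ is a complete and correct proof. The one caveat is that for the central commutativity $[\Deb_x,\Deb_y]=0$ your argument, like the paper's, still defers to the $r$-matrix computation in \cite{T}, so what you have is a sound and correctly organized proof outline with the single hardest step left as a citation rather than reproved.
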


For partitions \,$\la^{(1)}\<\lc\la^{(k)}$ and distinct complex numbers
$z_1\lc z_k$\>, \>consider the tensor product
\,$L_{\bs\la}(\bs z)=L_{\la^{(1)}}(z_1)\lox L_{\la^{(k)}}(z_k)$ \>of evaluation
\vvn.2>
\,$\gl_n[t]$-modules. For every \,$g\in\gl_n$\>, the series \,$g(x)$ \,acts
on $L_{\bs\la}(\bs z)$ \>as a rational function of $x$.

\vsk.1>
As a \,$\gl_n$-module, \>$L_{\bs\la}(\bs z)$ \>does not depend on $z_1\lc z_k$
\vvn.1>
\>and equals \,$L_{\bs\la}=L_{\la^{(1)}}\lox L_{\la^{(k)}}$\>.
Let \>$L_{\bs\la}=\bigoplus_\mu L_\mu\ox M_{\bs\la,\:\mu}$ \>be
its decomposition into irreducible \,$\gl_n$-submodules.
\vvn.1>
Recall that the multiplicity space \>$M_{\bs\la,\:\mu}$ is trivial unless
\vvn-.3>
\beq
\label{mula}
|\:\mu\:|\,=\,\sum_{i=1}^k|\la^{(i)}|\,.
\eeq

As a subalgebra of \>$U(\gl_n[t])$, the algebra \>$\Bc_n$ acts on
$L_{\bs\la}(\bs z)$. Since $\Bc_n$ commutes with \,$\gl_n$, this action
descends to the action of \>$\Bc_n$ on each multiplicity space
\>$M_{\bs\la,\:\mu}$\>. For \,$b\in\Bc_n$\>, denote by
\,$b(\bs\la,\mu,\bs z)\in\End(M_{\bs\la,\:\mu})$ the corresponding linear
operator.

\vsk.1>
Given a common eigenvector \,$v\in M_{\bs\la,\:\mu}$ \,of the operators
\,$b(\bs\la,\mu,\bs z)$, denote by \,$b(\bs\la,\mu,\bs z;v)$
\>the corresponding eigenvalues, and define the scalar differential operator
\vvn.2>
\be
\De_v\>=\,\der_x^{\>n}+\sum_{i=1}^n\,\sum_{j=i}^\infty\,
B_{ij}(\bs\la,\mu,\bs z; v)\,x^{-j}\,\der_x^{\>n-i}\,.
\ee
One can check that \,$\De_v$ \>is a Fuchsian differential operator
with singular points at the points $z_1\lc z_k$ \>and infinity.
Moreover, for every \,$i=1\lc k$, the exponents of \,$\De_v$ at the point
\,$z_i$ \,are \,$\la^{(i)}_n\<,\la^{(i)}_{n-1}\?+1\lc\la^{(i)}_1\?+n-1$,
the exponents of \,$\De_v$ at infinity are
\,$-\:\mu_1\<+1-n,-\:\mu_2\<+2-n\lc-\:\mu_n$,
and the kernel of \,$\De_v$ is spanned by polynomials, see \cite{MTV2}.

\vsk.2>
Theorem \ref{langlands} below connects Schubert calculus and the Gaudin model.
Let a partition \>$\mu$ satisfy \Ref{mula}. Take \,$d\ge n+\mu_1$\>, \>and
define the partition \,$\nu$ \>by \Ref{numu}. Let \,$\Om(\bs\la,\nu,\bs z)$
\>be the intersection of Schubert varieties \Ref{Omlnz}.

\begin{thm}\cite{MTV4}
\label{langlands}
There is a bijective correspondence \,$\tau$ between common eigenvectors
of the operators \;$b(\bs\la,\mu,\bs z)\in\End(M_{\bs\la,\:\mu})$,
\;$b\in\Bc_n$\>, and points of \;$\Om(\bs\la,\nu,\bs z)$ such that
\,$\tau(v)$ \>is the kernel of the scalar differential operator \,$\De_v$\>.
For generic \,$\bs z$, \,the operators \;$b(\bs\la,\mu,\bs z)$ \,are
diagonalizable and have simple joint spectrum.
\qed
\end{thm}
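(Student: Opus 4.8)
The plan is to realize the correspondence $\tau$ through the scalar differential operators $\Delta_v$ and then match data on both sides. First I would recall from \cite{MTV2}, as already quoted in the paragraph preceding the theorem, that for a common eigenvector $v\in M_{\bs\la,\:\mu}$ the operator $\Delta_v$ is Fuchsian with singular points $z_1\lc z_k$ and $\infty$, with exponents at $z_i$ equal to $\la^{(i)}_n,\la^{(i)}_{n-1}\!+1\lc\la^{(i)}_1\!+n-1$, exponents at $\infty$ equal to $-\mu_1\!+1-n\lc-\mu_n$, and $\ker\Delta_v$ spanned by polynomials. The key observation is that with $d\ge n+\mu_1$ and $\nu$ defined by \Ref{numu}, the exponent data of $\Delta_v$ translates exactly into the ramification conditions (ba) and (bb) characterizing points of $\Om(\bs\la,\nu,\bs z)$: the polynomial degrees $d-i-\nu_{n+1-i}$ match the exponents at infinity after the substitution $\nu_j=d-n-\mu_{n+1-j}$, and the root orders $\la_{n+1-j}+j-1$ at $z_i$ match the exponents at $z_i$. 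Hence $\tau(v)=\ker\Delta_v$ lands in $\Om(\bs\la,\nu,\bs z)$.

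Next I would show $\tau$ is well defined and injective. Since $\Bc_n$ is commutative (Proposition \ref{bethe}), its joint eigenspaces decompose $M_{\bs\la,\:\mu}$; the map sends a joint eigenvector to the kernel of the associated scalar operator, and this depends only on the tuple of eigenvalues $b(\bs\la,\mu,\bs z;v)$, so it factors through the set of joint eigenspaces. Injectivity follows because the coefficients $B_{ij}(\bs\la,\mu,\bs z;v)$ of $\Delta_v$ are precisely the eigenvalues of the generators $B_{ij}$ of $\Bc_n$, so distinct joint eigenvalue-tuples give distinct operators $\Delta_v$ and a fortiori distinct kernels; conversely a point $W\in\Om(\bs\la,\nu,\bs z)$ recovers the monic Fuchsian operator annihilating it (its kernel being the $n$-dimensional space $W$ of polynomials), and reading off the coefficients recovers the eigenvalue tuple, hence the joint eigenspace. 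Surjectivity is the reverse passage: given $W\in\Om(\bs\la,\nu,\bs z)$, form the monic order-$n$ differential operator with kernel $W$; its exponent and kernel data force it to be of the form $\Delta$ evaluated at some tuple of scalars, and one checks this tuple is realized as a joint eigenvalue of $\Bc_n$ on $M_{\bs\la,\:\mu}$ — this is the substantive input from \cite{MTV4}, namely that the Bethe algebra action on the multiplicity space has spectrum exactly matching the Schubert intersection.

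The final sentence, that for generic $\bs z$ the operators $b(\bs\la,\mu,\bs z)$ are diagonalizable with simple joint spectrum, I would deduce by combining the bijection $\tau$ with the Schubert-calculus count: for generic $\bs z$ the intersection $\Om(\bs\la,\nu,\bs z)$ is reduced with exactly $m(\bs\la,\nu)=\dim M_{\bs\la,\:\mu}$ points (as stated in Section \ref{schubert sec}), so $\Bc_n$ has $\dim M_{\bs\la,\:\mu}$ distinct joint eigenvalues on $M_{\bs\la,\:\mu}$, forcing all joint eigenspaces to be one-dimensional and the commuting family to be simultaneously diagonalizable. The main obstacle is the surjectivity of $\tau$: producing, from an arbitrary monodromy-free Fuchsian operator with the prescribed exponents and polynomial kernel, an actual eigenvector of $\Bc_n$ on the multiplicity space. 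This requires the full strength of the results of \cite{MTV3,MTV4} identifying the Bethe-algebra spectrum with the variety of such operators — equivalently, that every point of $\Om(\bs\la,\nu,\bs z)$ arises as $\ker\Delta_v$ for some $v$ — and is not something one derives from the preceding material alone; I would cite it rather than reprove it.
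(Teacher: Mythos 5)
The paper does not prove Theorem~\ref{langlands}: it is attributed to \cite{MTV4} and followed immediately by \verb|\qed| with no argument in the body of the text, so there is no internal proof for your outline to be compared against. Your reconstruction is nevertheless sound and captures the right structure: translating the exponent and polynomial-kernel data of $\De_v$ from \cite{MTV2} into the Schubert conditions (ba)--(bb) under the dictionary \Ref{numu}; well-definedness from the fact that $\De_v$ depends only on the joint eigenvalue tuple; injectivity from the correspondence between the operator and its kernel; surjectivity as the substantive input of \cite{MTV3, MTV4}; and the generic diagonalizability and simple spectrum by matching the Schubert count $m(\bs\la,\nu)=\dim M_{\bs\la,\:\mu}$ against the number of distinct joint eigenvalue tuples. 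You also correctly isolate surjectivity as the one step that cannot be derived from the preceding material and must be cited.

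One logical slip is worth flagging. You write that ``distinct joint eigenvalue-tuples give distinct operators $\De_v$ and a fortiori distinct kernels.'' That implication is backwards: in general distinct differential operators can share a kernel, so ``a fortiori'' is not warranted there. The correct direction, which you in fact supply immediately afterwards, is the converse: an $n$-dimensional space $W$ of polynomials determines a \emph{unique} monic order-$n$ operator with kernel $W$ (via the Wronskian construction), so $\ker\De_{v_1}=\ker\De_{v_2}$ forces $\De_{v_1}=\De_{v_2}$, hence equality of the coefficient tuples $B_{ij}(\bs\la,\mu,\bs z;v)$, hence equality of the joint eigenvalue tuples. Rephrasing the injectivity argument in that order would make it airtight; with that repair, the outline is a faithful account of what \cite{MTV4} establishes.
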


\begin{rem}
Denote by \,$\Bc_n(\bs\la,\mu,\bs z)\subset\End(M_{\bs\la,\:\mu})$ \>the
commutative subalgebra, generated by the operators \;$b(\bs\la,\mu,\bs z)$\>,
\;$b\in\Bc_n$\>. It is proved in \cite{MTV4} that for all
\,$\bs z=(z_1\lc z_k)$ \,with distinct coordinates, $\Bc_n(\bs\la,\mu,\bs z)$
\>is a maximal commutative subalgebra of dimension \,$\dim M_{\bs\la,\:\mu}$\>,
\>and for a generic vector \,$w\in M_{\bs\la,\:\mu}$\>, the map
\be
\Bc_n(\bs\la,\mu,\bs z)\,\to\,M_{\bs\la,\:\mu}\,,\qquad
b(\bs\la,\mu,\bs z)\,\mapsto b(\bs\la,\mu,\bs z)\,w\,,
\ee
is an isomorphism of vector spaces.
\end{rem}

\section{Shapovalov form}
\label{shap sec}

For any partition \>$\la$ \>with at most $n$ parts, the irreducible
\,$\gl_n$-module \>$L_\la$ admits a positive definite Hermitian form
\,$(\cdot,\cdot)_\la$ such that \,$(E_{ij}\>v,w)_\la=(v,E_{ji}\>w)_\la$ \>for
any \,$i,j=1\lc n$ \>and any $v,w\in L_\la$\>. Such a form is unique up to
multiplication by a positive real number. We will call this form the Shapovalov
form.

\vsk.1>
For partitions \,$\la^{(1)}\<\lc\la^{(k)}$ we define the positive definite
Hermitian form \,$(\cdot,\cdot)_{\bs\la}$ on the tensor product
\,$L_{\bs\la}=L_{\la^{(1)}}\lox L_{\la^{(k)}}$ \>as the product of Shapovalov
forms on the tensor factors. For each multiplicity space
\>$M_{\bs\la,\:\mu}$\>, the form \,$(\cdot,\cdot)_{\bs\la}$ induces
a positive definite Hermitian form \,$(\cdot,\cdot)_{\bs\la,\:\mu}$ on
\>$M_{\bs\la,\:\mu}$\>.

\begin{prop}
\label{sym bethe}
For any \,$i=1\lc n$, \,$j\in\Z_{\geq i}$\>, and any
\,$v,w\in M_{\bs\la,\:\mu}$\>,
\vvn.2>
\beq
\bigl(B_{ij}(\bs\la,\mu,\bs z)\>v,w\bigr)_{\bs\la,\:\mu}\,=\,
\bigl(v,B_{ij}(\bs\la,\mu,\zbb)\>w\bigr)_{\bs\la,\:\mu}\,,
\vv.1>
\eeq
where \,$B_{ij}$ are defined by \Ref{gen}, \,$\zbb=(\bar z_1\lc\bar z_k)$ and
the bar stands for the complex conjugation.
\end{prop}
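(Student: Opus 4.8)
The plan is to reduce the statement to an identity about the universal operator $\Deb$ and its behavior under complex conjugation of the evaluation parameters, and then transport that identity through the $\gl_n$-decomposition to the multiplicity spaces. First I would recall that the Shapovalov form $(\cdot,\cdot)_\la$ on each irreducible $L_\la$ satisfies $(E_{ij}v,w)_\la=(v,E_{ji}w)_\la$, so on the tensor product $L_{\bs\la}$ the product form $(\cdot,\cdot)_{\bs\la}$ makes the adjoint of $E_{ij}$ (acting in any tensor slot) equal to $E_{ji}$ in the same slot. In other words, if for $g\in\gl_n$ we write $g^\dagger$ for the adjoint with respect to $(\cdot,\cdot)_{\bs\la}$, then $E_{ij}^\dagger=E_{ji}$, i.e. $g\mapsto g^\dagger$ is the antilinear anti-involution fixing the $E_{ii}$ and swapping $E_{ij}\leftrightarrow E_{ji}$. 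The key point is to understand how this anti-involution interacts with the series $g(x)$ acting on $L_{\bs\la}(\bs z)$, where $g(x)$ acts as $\sum_m g\,z_m^{?}$-type rational functions; concretely $E_{ij}(x)$ acts on $L_{\bs\la}(\bs z)$ as $\sum_{m=1}^k E_{ij}^{(m)}/(x-z_m)$, where $E_{ij}^{(m)}$ denotes $E_{ij}$ in the $m$-th tensor factor.

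Next I would compute the adjoint of $E_{ij}(x)$ with respect to $(\cdot,\cdot)_{\bs\la}$. Since the coefficients $1/(x-z_m)$ are to be treated as formal scalars (elements of the field $\C(x)$, with complex conjugation acting on the constants $z_m$), and since the Hermitian adjoint conjugates scalars, the adjoint of $E_{ij}(x)$ is $\sum_{m=1}^k E_{ji}^{(m)}/(x-\bar z_m)$, which is exactly $E_{ji}(x)$ evaluated at the conjugated parameters $\zbb$. Writing $X_{ij}=\dl_{ij}\der_x-E_{ij}(x)$, it follows that the adjoint of $X_{ij}$ (with respect to $(\cdot,\cdot)_{\bs\la}$, and treating $\der_x$ as a fixed operator commuting with the form since it acts only on the scalar variable $x$) is $\dl_{ij}\der_x-E_{ji}(x)|_{\zbb}$, i.e. $X_{ji}$ with parameters replaced by $\zbb$. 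Taking adjoints in the defining formula $\Deb=\sum_{\si\in S_n}X_{\si(1),1}X_{\si(2),2}\dots X_{\si(n),n}$, and using that $(AB)^\dagger=B^\dagger A^\dagger$, the product reverses, and a reindexing of the symmetric group sum shows that the adjoint of $\Deb$ (evaluated at $\bs z$) equals $\Deb$ evaluated at $\zbb$ — more precisely, the reversed product $\sum_{\si}X_{1,\si(1)}^{\zbb}\dots X_{n,\si(n)}^{\zbb}$ is the "column" version of the universal operator, which is known to coincide with $\Deb$ (this symmetry of the Talalaev/universal operator between row and column expansions is standard; if one prefers, one can instead argue directly at the level of the coefficients $B_{ij}$). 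Comparing coefficients of $x^{-j}\der_x^{n-i}$ on both sides yields $B_{ij}(\bs z)^\dagger = B_{ij}(\zbb)$ as operators on $L_{\bs\la}(\bs z)$.

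Finally I would descend to the multiplicity spaces. Since $B_{ij}\in\Bc_n$ commutes with $\gl_n$ (Proposition~\ref{bethe}), the operator $B_{ij}(\bs\la,\mu,\bs z)$ on $M_{\bs\la,\:\mu}$ is the restriction of $B_{ij}(\bs z)$ acting on $L_{\bs\la}$ under the decomposition $L_{\bs\la}=\bigoplus_\mu L_\mu\ox M_{\bs\la,\:\mu}$; and because the Hermitian form $(\cdot,\cdot)_{\bs\la}$ restricts to the product of the (positive definite) form on $L_\mu$ and the induced form $(\cdot,\cdot)_{\bs\la,\:\mu}$ on $M_{\bs\la,\:\mu}$, taking adjoints commutes with this restriction. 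Hence the identity $B_{ij}(\bs z)^\dagger=B_{ij}(\zbb)$ on $L_{\bs\la}$ restricts to $\bigl(B_{ij}(\bs\la,\mu,\bs z)v,w\bigr)_{\bs\la,\:\mu}=\bigl(v,B_{ij}(\bs\la,\mu,\zbb)w\bigr)_{\bs\la,\:\mu}$ for all $v,w\in M_{\bs\la,\:\mu}$, which is the claim. I expect the main obstacle to be the bookkeeping in Step~two: justifying rigorously that $\der_x$ is "self-adjoint" (or better, a neutral scalar operator) for the purposes of taking the adjoint of the noncommutative product defining $\Deb$, and that the reversed-product form of the universal operator genuinely reproduces the same $B_{ij}$'s — one must be careful that $\der_x$ does not commute with the $E_{ij}(x)$, so the adjoint of a product $\der_x\cdot E_{ij}(x)$ must be handled as $E_{ji}(x)|_{\zbb}\cdot\der_x$, and the resulting normal-ordering is precisely what the row-vs-column symmetry of the universal operator accounts for.
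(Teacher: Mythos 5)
The paper's own proof is a one-line citation to \cite[Theorem~9.1]{MTV1}, whereas you attempt a direct argument from the definition \eqref{gen}. Your first and last steps are sound: the Shapovalov anti-involution gives $(E_{ij}^{(m)})^\dagger=E_{ji}^{(m)}$ with the parameters $z_m$ conjugated, and once one knows $B_{ij}(\bs z)^\dagger=B_{ij}(\zbb)$ on $L_{\bs\la}$ the descent to the multiplicity spaces is immediate because $\Bc_n$ commutes with $\gl_n$ and the form $(\cdot,\cdot)_{\bs\la}$ factors as a product.

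The middle step, however, contains a genuine error. The adjoint you want to compute is of the \emph{normal-ordered coefficients} $B_{ij}$, and this operation does \emph{not} coincide with ``reverse the product of the $X_{\si(i),i}$ and transpose each factor,'' because normal ordering (moving every $\der_x$ to the right using $[\der_x,E_{ab}(x)]=E_{ab}'(x)$) does not commute with that reversal. Already for $n=2$ one has
$\De\,=\,X_{11}X_{22}-X_{21}X_{12}\,=\,\der_x^2-(E_{11}+E_{22})\>\der_x+E_{11}E_{22}-E_{22}'-E_{21}E_{12}$,
while the reversed-transposed sum $X_{22}X_{11}-X_{21}X_{12}$ normal-orders to
$\der_x^2-(E_{11}+E_{22})\>\der_x+E_{22}E_{11}-E_{11}'-E_{21}E_{12}$;
since $[E_{11}(x),E_{22}(x)]=0$ these two differ by $E_{22}'(x)-E_{11}'(x)\neq0$. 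The ``row-vs-column symmetry'' you invoke to absorb this discrepancy is not in fact available here: for this matrix the row and column determinants themselves differ by $[X_{21},X_{12}]=E_{11}'(x)-E_{22}'(x)$, the same nonzero term. So the step $(AB)^\dagger=B^\dagger A^\dagger$ applied to the un-normal-ordered product, followed by normal ordering, does not reproduce $\tau$ applied to the $B_{ij}$, and the cited symmetry does not rescue it. The statement itself is true — for instance for $n=2$ one checks directly that $\tau(E_{11}E_{22}-E_{22}'-E_{21}E_{12})=E_{22}E_{11}-E_{22}'-E_{21}E_{12}=E_{11}E_{22}-E_{22}'-E_{21}E_{12}$ — but proving $\tau(B_{ij})=B_{ij}$ in general requires a real argument (this is exactly the content of \cite[Theorem~9.1]{MTV1}); it does not follow from the naive reversal of the $X$-product.
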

\begin{proof}
The claim follows from \cite[Theorem~9.1]{MTV1}.
\end{proof}

If some of the partitions \,$\la^{(1)}\lc\la^{(k)}$ coincide, the operators
\,$b(\bs\la,\mu,\bs z)$ have additional symmetry. Assume that
\,$\la^{(i)}=\la^{(i+1)}$ for some \,$i$\>. Let \>$P_i\in\End(L_{\bs\la})$
be the flip of the \,$i$-th and \,$(i+1)$-st tensor factors and
\,$\zbti^{(i)}=(z_1\lc z_{i-1},z_{i+1},z_i,z_{i+2}\lc z_k)$\>.

\begin{lem}
\label{perm bethe}
For any \,$b\in\Bc_n$\>, we have
\,$P_i\,b(\bs\la,\mu,\bs z)\>P_i\,=\,b(\bs\la,\mu,\zbti^{(i)})$\,.
\qed
\end{lem}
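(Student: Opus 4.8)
The plan is to realize the flip $P_i$ as the action of a transposition coming from a symmetry of the universal operator $\Deb$, and then track how that symmetry permutes the evaluation parameters. The key point is that the universal operator $\Deb$ of \Ref{gen} is built from the current algebra $\gl_n[t]$ only, while the dependence on $\bs z$ enters solely through the evaluation homomorphisms $\eps_{z_j}$ defining $L_{\bs\la}(\bs z)$. Concretely, the series $g(x)$ acts on $L_{\bs\la}(\bs z)$ as $\sum_{j=1}^k g^{(j)}(x-z_j)^{-1}$, where $g^{(j)}$ is $g$ acting in the $j$-th tensor factor; hence all the coefficients $B_{ij}$, being noncommutative polynomials in the $E_{ab}\ox t^r$, act on $L_{\bs\la}(\bs z)$ as universal expressions in the $E_{ab}^{(j)}$ and the scalars $z_j$.

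First I would set up the conjugation on $L_{\bs\la}=L_{\la^{(1)}}\lox L_{\la^{(k)}}$ by the flip $P_i$ of the $i$-th and $(i+1)$-st factors, which is well defined because $\la^{(i)}=\la^{(i+1)}$ so the two factors are literally the same module. Since $P_i$ is $\gl_n$-equivariant (it intertwines the diagonal $\gl_n$-action), conjugation by $P_i$ preserves the decomposition into isotypic components and restricts to an operator on each $M_{\bs\la,\:\mu}$, so the statement makes sense. Next I would observe that for any $g\in\gl_n$ and any $r$, conjugation by $P_i$ sends the operator by which $g\ox t^r$ acts on $L_{\bs\la}(\bs z)$ — namely $\sum_j z_j^r\,g^{(j)}$ — to $\sum_j z_j^r\,g^{(\sigma_i(j))}$ where $\sigma_i$ is the transposition $(i,i+1)$; reindexing the sum, this equals $\sum_j z_{\sigma_i(j)}^r\,g^{(j)}$, which is exactly the action of $g\ox t^r$ on $L_{\bs\la}(\zbti^{(i)})$ (note $\sigma_i^{-1}=\sigma_i$, so whether one permutes factors or parameters gives the same swap, matching the definition of $\zbti^{(i)}$). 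More invariantly: $P_i$ conjugates the representation $\rho_{\bs z}$ of $\gl_n[t]$ on $L_{\bs\la}(\bs z)$ to the representation $\rho_{\zbti^{(i)}}$ on $L_{\bs\la}(\zbti^{(i)})$ (the underlying vector space is the same).

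Then the result is immediate: since each $B_{ij}$ lies in $U(\gl_n[t])$, applying a ring homomorphism to the relation $P_i\,\rho_{\bs z}(X)\,P_i=\rho_{\zbti^{(i)}}(X)$ for $X\in\gl_n[t]$ gives $P_i\,\rho_{\bs z}(B_{ij})\,P_i=\rho_{\zbti^{(i)}}(B_{ij})$, i.e. $P_i\,B_{ij}(\bs\la,\mu,\bs z)\,P_i=B_{ij}(\bs\la,\mu,\zbti^{(i)})$ on $L_{\bs\la}$, and hence on each $M_{\bs\la,\:\mu}$ after restricting. Because the Bethe subalgebra $\Bc_n$ is generated by the $B_{ij}$, and conjugation by the involution $P_i$ is an algebra automorphism, the same identity holds for every $b\in\Bc_n$. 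I do not expect a genuine obstacle here; the only things to be careful about are (a) checking that $P_i$ really is $\gl_n$-equivariant so that the restriction to $M_{\bs\la,\:\mu}$ is legitimate, and (b) getting the direction of the parameter swap right — that conjugating by the factor-flip $P_i$ corresponds to swapping $z_i$ and $z_{i+1}$ rather than to some other permutation — which is a one-line reindexing but is the kind of bookkeeping that is easy to state backwards. Both are routine, so the "hard part" is essentially just writing the equivariance and the reindexing cleanly.
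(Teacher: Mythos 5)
Your proof is correct, and since the paper states this lemma with a bare \verb|\qed| and no written argument, the "official" proof is precisely the computation you supply: $P_i$ is $\gl_n$-equivariant, conjugation by $P_i$ sends the $L_{\bs\la}(\bs z)$-action of $g\ox t^r$ to the $L_{\bs\la}(\zbti^{(i)})$-action, and this extends multiplicatively over $U(\gl_n[t])$ and hence to the generators $B_{ij}$ of $\Bc_n$. Your bookkeeping of the reindexing (using that $\sigma_i$ is an involution) is right, so there is nothing to add.
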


\section{Selfadjoint operators with respect to indefinite Hermitian form}
\label{lin alg sec}
In this section we discuss the key statements from linear algebra.

\vsk.2>
Given a finite-dimensional vector space $M$, a linear operator $A\in\End{M}$,
and a number $\al\in\C$, let \,$M_A(\al)=\ker\:(A-\al)^{\dim M}$.
When \,$M_A(\al)$ is not trivial, it is the subspace of generalized
eigenvectors of $A$ with eigenvalue $\al$.

\begin{lem}\label{linalg}
Let \>$M$ \?be a complex finite-dimensional vector space with a nondegenerate
Hermitian form of signature $m$, and let \>$A$ be a selfadjoint operator.
Let \>$R=\bigoplus_{\al\in\R}M_A(\al)$ be the subspace of generalized
eigenvectors of $A$ with real eigenvalues. Then the restriction of
the Hermitian form on \>$R$ \?is nondegenerate and has signature \>$m$.
In particular, \>$\dim\:R\ge|\:m\:|$\>.
\end{lem}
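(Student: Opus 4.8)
The plan is to decompose $M$ as a direct sum of $A$-invariant subspaces that are mutually orthogonal with respect to the Hermitian form, so that the form restricted to each summand is nondegenerate; then the signature is additive over the decomposition, and we compute the contribution of the "complex eigenvalue" part, showing it is $0$. Concretely, write $M = R \oplus R'$ where $R = \bigoplus_{\al\in\R} M_A(\al)$ and $R' = \bigoplus_{\al\in\C\setminus\R} M_A(\al)$. Both are $A$-invariant (each generalized eigenspace is), and together they span $M$ since every operator on a finite-dimensional complex space is the direct sum of its generalized eigenspaces.

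The first key step is orthogonality of distinct generalized eigenspaces under the pairing when the eigenvalues are not complex-conjugate. Because $A$ is selfadjoint, for $v \in M_A(\al)$ and $w \in M_A(\bt)$ one has $\bra (A-\al)^j v, w\ket = \bra v, (A-\bar\al)^j w\ket$; iterating the standard argument (the form pairs $M_A(\al)$ nontrivially only with $M_A(\bar\al)$) shows $\bra M_A(\al), M_A(\bt)\ket = 0$ unless $\bt = \bar\al$. In particular every summand $M_A(\al)$ with $\al$ real is orthogonal to all summands with non-real eigenvalue and to all real summands $M_A(\bt)$, $\bt\neq\al$; hence $R$ is orthogonal to $R'$, and moreover the form on $R$ is block-diagonal over the real eigenvalues. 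Since the total form is nondegenerate and $R \perp R'$ with $M = R\oplus R'$, the restrictions to both $R$ and $R'$ are nondegenerate, and $\on{sgn}(M) = \on{sgn}(R) + \on{sgn}(R')$.

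The second key step is that $\on{sgn}(R') = 0$. Group the summands of $R'$ into conjugate pairs $M_A(\al)\oplus M_A(\bar\al)$ for $\al\notin\R$. On such a pair the form is a nondegenerate pairing between $M_A(\al)$ and $M_A(\bar\al)$ (each is isotropic by the previous paragraph, since $\bra M_A(\al),M_A(\al)\ket=0$ when $\al\neq\bar\al$). A nondegenerate Hermitian form on a space of the shape $N \oplus N^{*}$ with $N, N^{*}$ isotropic and in perfect pairing has signature zero: choosing a basis $e_1\lc e_r$ of $M_A(\al)$ and the dual basis $f_1\lc f_r$ of $M_A(\bar\al)$, the vectors $e_j + f_j$ and $i(e_j - f_j)$ (suitably normalized) split into $r$ positive and $r$ negative directions. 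Summing over conjugate pairs gives $\on{sgn}(R') = 0$, hence $\on{sgn}(R) = m$; since the signature of a nondegenerate Hermitian form on an $N$-dimensional space has absolute value at most $N$, we get $\dim R \ge |m|$.

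The main obstacle is the bookkeeping in the first step: one must be careful that the identity $\bra (A-\al)v, w\ket = \bra v, (A-\bar\al)w\ket$ combined with selfadjointness genuinely forces orthogonality of $M_A(\al)$ and $M_A(\bt)$ for $\bt\neq\bar\al$, which requires an induction on the nilpotency degrees (on $M_A(\al)$ the operator $A-\al$ is nilpotent while $A - \bar\bt$ is invertible, so repeatedly transferring powers across the form eventually kills the pairing). Everything after that — orthogonality of $R$ and $R'$, nondegeneracy of the restrictions, and the signature-zero computation on conjugate pairs — is then routine linear algebra.
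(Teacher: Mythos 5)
Your proof is correct and follows essentially the same route as the paper's: establish the orthogonality relation $M_A(\al)^\perp = \bigoplus_{\bt\ne\bar\al}M_A(\bt)$ (the paper states this in one line, you outline the nilpotency induction), observe that each conjugate pair $M_A(\al)\oplus M_A(\bar\al)$ with $\al\notin\R$ carries a nondegenerate hyperbolic pairing of zero signature, and conclude by additivity of signature over the orthogonal decomposition. You simply fill in details the paper leaves implicit.
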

\begin{proof}
Since $A$ is selfadjoint, $M_A(\al)^\perp\?=\bigoplus_{\bt\ne\bar\al}M_A(\bt)$.
In particular, if $\al$ is an eigenvalue of $A$ that is not real,
the restriction of the Hermitian form on the subspace
\,$M_A(\al)^\perp\oplus M_A(\bar\al)$ is nondegenerate and has zero signature.
Thus, the restriction of the Hermitian form on the subspace $R$
\,is nondegenerate and has signature \,$m$\>.
\end{proof}

\begin{cor}
\label{linalg2}
Let \>$M$ \?be a complex finite-dimensional vector space with a nondegenerate
Hermitian form of signature \>$m$, and let \,$\Ae\subset\End(M)$ be
a commutative subalgebra over \,$\R$, whose elements are selfadjoint operators.
Let \>$R=\bigcap_{A\in\Ae}\bigoplus_{\al\in\R}M_A(\al)$.
Then the restriction of the Hermitian form on \>$R$ \?is nondegenerate
and has signature \>$m$. In particular, \>$\dim\:R\ge|\:m\:|$\>.
\end{cor}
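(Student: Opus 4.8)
The plan is to deduce Corollary~\ref{linalg2} from Lemma~\ref{linalg} by intersecting the real generalized eigenspaces one operator at a time, using commutativity of $\Ae$ to make the argument work. The key point is that when $A,B\in\Ae$ commute, the decomposition $M=\bigoplus_\al M_A(\al)$ is $B$-invariant, so each summand $M_A(\al)$ is preserved by $B$ and the two decompositions are compatible.

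First I would fix an enumeration $A_1,A_2,\dots,A_N$ of a (finite) spanning set of $\Ae$ over $\R$; since all $A_i$ are selfadjoint and $\Ae$ is commutative, it suffices to control $\bigcap_{i=1}^N \bigoplus_{\al\in\R} M_{A_i}(\al)$, because an arbitrary $A\in\Ae$ is an $\R$-linear combination of the $A_i$ and the generalized real eigenspace of such a combination contains the common intersection (one should check this last point, or simply enlarge the spanning set to all of $\Ae$ and argue by induction on a maximal chain; the cleanest route is the inductive one below). Set $M^{(0)}=M$ and $R^{(j)}=\bigoplus_{\al\in\R} M_{A_j}(\al)\cap M^{(j-1)}$, and let $M^{(j)}=R^{(j)}$. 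I claim each $M^{(j)}$ is a nondegenerate subspace of signature $m$, and each $M^{(j)}$ is invariant under all of $\Ae$. The invariance is where commutativity enters: $M^{(1)}=\bigoplus_{\al\in\R}M_{A_1}(\al)$ is $\Ae$-invariant because every element of $\Ae$ commutes with $A_1$ and hence preserves each generalized eigenspace $M_{A_1}(\al)$; inductively, $M^{(j)}$ is cut out inside the $\Ae$-invariant space $M^{(j-1)}$ by the same construction applied to $A_j|_{M^{(j-1)}}$, which is again selfadjoint for the restricted (nondegenerate) form and commutes with the restrictions of all elements of $\Ae$, so $M^{(j)}$ is again $\Ae$-invariant.

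The nondegeneracy and signature claim at each stage is exactly Lemma~\ref{linalg} applied to the operator $A_j|_{M^{(j-1)}}$ acting on the space $M^{(j-1)}$ equipped with the restricted Hermitian form: by the inductive hypothesis that form is nondegenerate of signature $m$, and $A_j|_{M^{(j-1)}}$ is selfadjoint with respect to it, so its real-generalized-eigenspace $M^{(j)}$ is nondegenerate of signature $m$. After $N$ steps we obtain $M^{(N)}=\bigcap_{j=1}^N\bigl(\bigoplus_{\al\in\R}M_{A_j}(\al)\bigr)$, nondegenerate of signature $m$; the only remaining task is to identify this with $R=\bigcap_{A\in\Ae}\bigoplus_{\al\in\R}M_A(\al)$. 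One inclusion is trivial. For the other, I would observe that since $\Ae$ acts on the nondegenerate space $M^{(N)}$ by commuting selfadjoint operators, every $A\in\Ae$ already acts on $M^{(N)}$ with only real eigenvalues — indeed, restricting $A$ to $M^{(N)}$ and applying Lemma~\ref{linalg} once more shows the real-generalized-eigenspace of $A|_{M^{(N)}}$ is all of $M^{(N)}$ (its complement has signature $0$ but would have to sit inside a space where $A|_{M^{(N)}}$ has no real eigenvalues, and a short argument shows it must be zero, since a selfadjoint operator on a finite-dimensional space with indefinite form still has at least one eigenvector, forcing an eigenvalue whose conjugate is also an eigenvalue, and iterating peels everything off) — hence $M^{(N)}\subseteq\bigoplus_{\al\in\R}M_A(\al)$ for all $A\in\Ae$, giving $M^{(N)}\subseteq R$.

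The main obstacle I anticipate is the bookkeeping in the last step: making precise that a selfadjoint operator (for an indefinite form) on a space of zero signature whose eigenvalues are forced to come in conjugate pairs must actually be trivial if it has \emph{no} non-real eigenvalue — equivalently, pinning down that $R=M^{(N)}$ rather than just $R\subseteq M^{(N)}$. In practice this is handled by the same $M_A(\al)^\perp=\bigoplus_{\bt\neq\bar\al}M_A(\bt)$ identity used in Lemma~\ref{linalg}, now applied inside $M^{(N)}$; since all eigenvalues of $A|_{M^{(N)}}$ turn out to be real there is nothing to subtract, and the conclusion $\dim R\ge|m|$ follows because a nondegenerate Hermitian form of signature $m$ lives on a space of dimension at least $|m|$.
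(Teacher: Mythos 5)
Your inductive construction --- fixing a finite $\R$-basis $A_1,\dots,A_N$ of $\Ae$, applying Lemma~\ref{linalg} one operator at a time, and using commutativity to keep each $M^{(j)}$ invariant under $\Ae$ --- is exactly the route the paper takes, and that part is correct. The gap is in your closing paragraph, where you try to show $M^{(N)}\subseteq R$, i.e.\ that every $A\in\Ae$ has only real generalized eigenvalues on $M^{(N)}$. The mechanism you offer --- the ``non-real part'' of $A|_{M^{(N)}}$ has signature zero and no real eigenvalue, and ``iterating peels everything off'' because eigenvalues come in conjugate pairs --- is not a valid argument: peeling off a conjugate pair $\{\al,\bar\al\}$ with $\al\notin\R$ removes a genuine nonzero block, so the iteration does not terminate with the zero space. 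Concretely, on $\C^2$ with Hermitian form $\bigl(\begin{smallmatrix}0&1\\1&0\end{smallmatrix}\bigr)$ (nondegenerate, signature $0$) the operator $\diag(i,-i)$ is selfadjoint and has no real eigenvalue at all. Lemma~\ref{linalg} tells you only that the real-generalized-eigenspace of $A|_{M^{(N)}}$ has signature $m$, not that it is all of $M^{(N)}$; if your argument were correct it would show every selfadjoint operator for a nondegenerate Hermitian form has purely real spectrum, which is precisely what fails in the indefinite setting.

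The correct way to close must use that $A$ is a real linear combination (or real polynomial) of the $A_i$, not merely that it is selfadjoint and commutes. On $M^{(N)}$ the commuting operators $A_1,\dots,A_N$ admit a simultaneous generalized eigenspace decomposition $M^{(N)}=\bigoplus_{\bs\al}W_{\bs\al}$ indexed by real tuples $\bs\al=(\al_1,\dots,\al_N)$; on each block $W_{\bs\al}$ each $A_i$ acts as $\al_i\cdot\id$ plus a nilpotent, and these nilpotents commute, so every real linear combination of the $A_i$ acts on $W_{\bs\al}$ as a real scalar plus a nilpotent and hence has only real eigenvalues on $M^{(N)}$. This is the content the paper compresses into ``Clearly, $R=\bigcap_{i=1}^{k}\bigoplus_{\al\in\R}M_{A_i}(\al)$.'' You correctly flagged this identification as the main obstacle; the repair above, rather than the eigenvalue-peeling argument, is what is needed.
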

\begin{proof}
Let \>$A_1\lc A_k$ be a basis of \>$\Ae$. Clearly,
\vvn.1>
$R=\bigcap_{\>i=1}^{\>k}\bigoplus_{\al\in\R} M_A(\al)$.
Let \>$M_1=\bigoplus_{\al\in\R}M_{A_1}(\al)$. The subspace \>$M_1$ is
\vv.1>
\>$\Ae$-invariant and the restriction of the Hermitian form on \>$M_1$ is
nondegenerate and has signature \>$m$ by Lemma \ref{linalg}.
The corollary follows by induction.
\end{proof}

In fact, Lemma \ref{linalg} can be strengthened.

\begin{lem}[\cite{P}]
\label{pontr lem}
Under the assumption of Lemma \ref{linalg}, the operator \>$A$ has at least
\>$m$ linearly independent eigenvectors with real eigenvalues:
$\dim\:\bigoplus_{\al\in\R}\ker\:(A-\al)\geq m$.
\qed
\end{lem}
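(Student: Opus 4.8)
The plan is to prove the statement by induction on $\dim M$, reducing at each step to a selfadjoint operator on a smaller space while keeping track of both the signature and the eigenvector count. The key observation is that it suffices to exhibit one real eigenvalue with the right "weight", in the following sense: if $\al_0\in\R$ is an eigenvalue of $A$, then $\ker(A-\al_0)$ is a subspace on which the Hermitian form restricts to something with a well-defined signature, and one needs to arrange that summing these contributions over all real eigenvalues recovers at least $m$. First I would recall from the proof of Lemma \ref{linalg} that $R=\bigoplus_{\al\in\R}M_A(\al)$ carries a nondegenerate form of signature $m$, and that each generalized eigenspace $M_A(\al)$ with $\al\in\R$ is itself a nondegenerate subspace; so it is enough to prove the inequality separately on each $M_A(\al)$, i.e. one may assume $A$ has a single eigenvalue $\al_0\in\R$, and after replacing $A$ by $A-\al_0$, that $A$ is nilpotent. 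Thus the reduction is: \emph{if $A$ is a nilpotent selfadjoint operator on $(M,(\cdot,\cdot))$ with $(\cdot,\cdot)$ nondegenerate of signature $m$, then $\dim\ker A\ge m$.}

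For the nilpotent case I would analyze the Jordan structure of $A$ relative to the form. The standard classification of a nilpotent operator selfadjoint with respect to a nondegenerate Hermitian form says that $M$ decomposes orthogonally into $A$-invariant "Jordan blocks": blocks of odd size each carrying a sign $\pm1$ and contributing $\pm1$ to the signature, and pairs of blocks of even size that together contribute $0$ to the signature. (Concretely: on a single Jordan block $\C e, Ae, \dots, A^{p-1}e$ with $A^p e=0$, the form is determined by $(e,A^{p-1}e)$; selfadjointness forces this scalar to be real for $p$ odd, giving a sign, and for $p$ even the block must be paired with another to get a nondegenerate Hermitian form.) Each such block, of whatever size, contributes exactly one dimension to $\ker A$. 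Hence $\dim\ker A$ equals the number of blocks, which is at least the number of odd blocks, which in turn is at least $|m|$ since only odd blocks contribute to the signature and each contributes $\pm1$. Since $m\le|m|$ the claimed inequality $\dim\ker A\ge m$ follows; applying this on each real generalized eigenspace and summing gives $\dim\bigoplus_{\al\in\R}\ker(A-\al)\ge\sum_\al m_\al=m$, where $m_\al$ is the signature of the form on $M_A(\al)$.

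The main obstacle is establishing the normal form for a nilpotent selfadjoint operator with respect to an indefinite Hermitian form — that is, proving that $M$ is an orthogonal direct sum of the single and paired Jordan blocks described above, with the stated signature bookkeeping. This is a linear-algebra fact (essentially the classification of pairs consisting of a nondegenerate sesquilinear form and a compatible nilpotent), but it is the technical heart of the argument; everything else (the reduction to the nilpotent case via Lemma \ref{linalg}, and the counting of blocks versus signature) is routine once that structure theorem is in hand. An alternative, if one prefers to avoid the full normal form, is to cite Pontryagin's theorem directly (as the bracketed reference \cite{P} in the statement suggests), since this is precisely the content of the classical Pontryagin space result on $\Pi_\kappa$-spaces; in that case the proof reduces to quoting \cite{P} and noting that it applies here because $A$ is selfadjoint and $M$ finite-dimensional.
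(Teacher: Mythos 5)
The paper itself offers no proof of this lemma: it simply cites Pontryagin's 1944 paper \cite{P} and stamps the statement with \,$\qed$\,. Your last paragraph already anticipates this, so the "official" route is exactly the citation option you describe. Your main argument---reduce via Lemma \ref{linalg} to a nilpotent selfadjoint operator on each real generalized eigenspace, then invoke the canonical form of a nilpotent operator compatible with a nondegenerate Hermitian form, and count Jordan blocks against the signature---is a genuine, self-contained alternative. The reduction step is carried out correctly: orthogonality of the $M_A(\al)$ for distinct real $\al$ and nondegeneracy of the form on each of them both come straight from the proof of Lemma \ref{linalg}, and $\dim\ker(A-\al)\ge|m_\al|$ on each piece sums to $\ge|\sum_\al m_\al|=|m|\ge m$, which in fact gives the slightly stronger bound $\ge|m|$ for free. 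The price is that the normal form for a nilpotent $H$-selfadjoint operator (the sign characteristic) is quoted rather than proved; you are upfront that this is the technical heart, and it is a standard fact, so this is acceptable as a proof sketch.

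One factual correction to your description of that normal form: an \emph{even}-sized Jordan block with a real eigenvalue does \emph{not} need to be paired with another block to carry a nondegenerate Hermitian form. On the single block with basis $e,Ae,\dots,A^{p-1}e$ the Gram matrix $\bigl((A^ie,A^je)\bigr)_{i,j}$ is a real anti-lower-triangular Hankel matrix with nonzero anti-diagonal, hence nondegenerate for every $p$; its signature is $\pm1$ when $p$ is odd and $0$ when $p$ is even. Pairing of blocks is required only for \emph{non-real} eigenvalues, which do not occur in the nilpotent case. This slip does not damage your conclusion, since the only properties you actually use are: each block contributes one dimension to $\ker A$, and only odd blocks contribute (by $\pm1$) to the signature; both remain true under the corrected normal form, giving $\dim\ker A\ge\#\{\text{odd blocks}\}\ge|m|$ as you want.
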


Contrary to the case of positive definite Hermitian form, Lemma \ref{pontr lem}
does not extend to a pair of commuting selfadjoint operators. A counterexample
is given by the multiplication operators in the ring $\C[x,y]/(x^2\?=y^2,xy=0)$
with the usual Grothendieck residue form. Explicitly, we have
a four-\:dimensional commutative real unital algebra of linear operators
in \,$\C^4$ generated by two matrices
\vvn.1>
\be
x\,=\left(\begin{matrix} 0 & 0 & 0 & 0 \\ 1 & 0 & 0 & 0 \\ 0 & 1 & 0 & 0 \\
0 & 0 & 0 & 0 \end{matrix}\right), \qquad
y\,=\left(\begin{matrix} 0 & 0 & 0 & 0 \\ 0 & 0 & 0 & 0 \\ 0 & 0 & 0 & 1 \\
1 & 0 & 0 & 0 \end{matrix}\right),
\vv.1>
\ee
that satisfy the relations $x^2\?=y^2$, \,$x^3\?=y^3\?=xy=yx=0$.
In particular, both \,$x$ and \,$y$ have the only eigenvalue that equals zero:
$M=M_x(0)=M_y(0)$. Clearly, \;$\dim\:\ker\:x=\dim\:\ker\:y=2$ \,and
\;$\dim\:(\ker x\bigcap\ker y)=1$\>.

\vsk.2>
The Hermitian form is given by the matrix
\vvn.2>
\be
J=\left(\begin{matrix} 0 & 0 & 1 & 0 \\ 0 & 1 & 0 & 0 \\ 1 & 0 & 0 & 0 \\
0 & 0 & 0 & 1 \end{matrix}\right).
\vv.1>
\ee
It is nondegenerate and has signature two. Since \,$x^t\<J=J\:\bar x$
\,and \,$y^t\<J=J\:\bar y$, the operators \>$x$ \>and \>$y$ \>are selfadjoint
and commuting, but have only one common eigenvector.

\vsk.2>
The given counterexample is minimal. If in addition to the assumption
of Corollary \ref{linalg2}, for each character \,$\rho:\Ae\to\C$ \,we have
\vvn.1>
\,$\dim\:\bigcap_{A\in\Ae}M_A(\rho(A))<4$\>, then there are at least \>$m$
\>linearly independent common eigenvectors of the elements of \>$\Ae$ with
\vvn.1>
real eigenvalues,
\;$\dim\:\bigcap_{A\in\Ae}\bigoplus_{\al\in\R}\ker(A-\al)\geq m$\>.

\section{The lower bound}
\label{bound sec}
In this section we prove our main theorem --- the lower bound for the number
of real solutions to osculating Schubert problems, see Theorem \ref{sgn} and
Corollary \ref{bound thm}.

\vsk.2>
Recall the notation from Section \ref{schubert sec}. For positive integers
$n,d$ such that $d>n$ we consider the Grassmannian of $\Gr(n,d)$ of
\,$n$-dimensional planes in the space \,$\C_d[x]$ \,of polynomials of degree
less than $d$. A point $W\in\Gr(n,d)$ is called real if it has a basis
consisting of polynomials with real coefficients.

\vsk.2>
Given partitions \,$\la^{(1)}\lc\la^{(k)}$ and $\nu$ with at most $n$ parts
satisfying \Ref{nu}\:, and distinct complex numbers \,$z_1\lc z_k$\>, denote
by \,$d(\bs\la,\nu,\bs z)$ the number of real points counted with
multiplicities in the intersection of Schubert varieties
\,$\Om(\bs\la,\nu,\bs z)\subset\Gr(n,d)$. Clearly, $d(\bs\la,\nu,\bs z)=0$
\,unless the set \,$\{z_1\lc z_k\}$ \,is invariant under the complex
conjugation and \>$\la^{(i)}\?=\la^{(j)}$ whenever $z_i=\bar z_j$. In what
follows we denote by \,$c$ \,the number of complex conjugate pairs in the set
\,$\{z_1\lc z_k\}$ and without loss of generality assume that
\,$z_1=\bar z_2\,,\lc z_{2c-1}=\bar z_{2c}$
\,while \,$z_{2c+1}\lc z_k$ are real. We will also always assume that
\,$\la^{(1)}\?=\la^{(2)}\<\lc\la^{(2c-1)}\?=\la^{(2c)}$.

\vsk.2>
For the sake of clarity, let us emphasize that by {\it generic\/} we always
mean {\it on a nonempty Zariski open subset of\/} $\C^k$. Recall that for any
\,$\bs\la,\nu$ \>and generic complex \,$\bs z$, the intersection of Schubert
varieties is transversal, that is, all points of \,$\Om(\bs\la,\nu,\bs z)$
are multiplicity-free. The same holds true under the reality condition
on \,$\bs z,\bs\la$ \>imposed above for any \>$c$\>.

\vsk.2>
Let \,$L_{\bs\la}=L_{\la^{(1)}}\lox L_{\la^{(k)}}$ \>be the tensor product
of irreducible \,$\gl_n\<$-modules and let \>$M_{\bs\la,\:\mu}$
\>be the multiplicity space of \>$L_\mu$ in \,$L_{\bs\la}$, see
Section \ref{gaudin sec}. Since \,$\la^{(2i-1)}\?=\la^{(2i)}$ for $i=1\lc c$,
the flip \>$P_{2i-1}$ of the \>$(2i-1)$-st and \>$2i$-th tensor factors
of \,$L_{\bs\la}$ commutes with the \,$\gl_n\<$-action and thus acts on
\>$M_{\bs\la,\:\mu}$. Denote by $P_{\bs\la,\:\mu,\:c}\in\End(M_{\bs\la,\:\mu})$
the action of the product \,$P_1P_3\dots P_{2c-1}$ on \>$M_{\bs\la,\:\mu}$.

\vsk.1>
The operator \>$P_{\bs\la,\:\mu,\:c}$ is selfadjoint relative to the Hermitian
form \,$(\cdot,\cdot)_{\bs\la,\:\mu}$ on \>$M_{\bs\la,\:\mu}$ given
in Section \ref{shap sec}. Define a new Hermitian form
\,$(\cdot,\cdot)_{\bs\la,\:\mu,\:c}$ on \>$M_{\bs\la,\:\mu}$ by the rule:
for any $v,w\in M_{\bs\la,\:\mu}$,
\vvn.2>
\be
(v,w)_{\bs\la,\:\mu,\:c}\,=\,(P_{\bs\la,\:\mu,\:c}\>v,w)_{\bs\la,\:\mu}\,.
\vv.1>
\ee
Denote by \,$q(\bs\la,\mu,c)$ \,the signature of the form
\,$(\cdot,\cdot)_{\bs\la,\:\mu,\:c}$.

\begin{prop}
\label{charsign}
The signature \,$q(\bs\la,\mu,c)$ \>equals the coefficients of the monomial\\
$x_1^{\:\mu_1+n-1}x_2^{\:\mu_2+n-2}\?\dots x_n^{\:\mu_n}$ in the polynomial
\be
\Dl_n\cdot\>\prod_{i=1}^c\,S_{\la^{(2i)}}(x_1^2\?\lc x_n^2)
\prod_{j=2c+1}^k\!S_{\la^{(j)}}(x_1\lc x_n)\,.
\vv.2>
\ee
Here \,$\Dl_n$ is the Vandermonde determinant \Ref{Delta} and
\,$S_\la$ are Schur polynomials \Ref{Schur}.
\end{prop}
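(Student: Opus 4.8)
The plan is to compute the signature $q(\bs\la,\mu,c)$ as a trace, reducing it to a character value of the symmetric group to which Proposition~\ref{char prop} applies. The starting observation is that for a nondegenerate Hermitian form obtained from a positive definite one by composing with a selfadjoint involution $P$, the signature equals $\tr P$. Indeed, $P$ is diagonalizable over $\R$ with eigenvalues $\pm1$, and one checks that the new form is positive definite on the $(+1)$-eigenspace and negative definite on the $(-1)$-eigenspace (using that the original form restricted to each eigenspace is positive definite and the eigenspaces are orthogonal with respect to both forms). Hence $q(\bs\la,\mu,c)=\tr_{M_{\bs\la,\:\mu}} P_{\bs\la,\:\mu,\:c}$, the trace of the action of $P_1P_3\dots P_{2c-1}$ on the multiplicity space.

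Next I would interpret this trace in the language of Section~\ref{char sec}. The permutation $\si=(1\,2)(3\,4)\dots(2c-1\;2c)\in S_k$ (a product of $c$ disjoint transpositions, fixing $2c+1\lc k$) acts on $L_{\bs\la}=L_{\la^{(1)}}\lox L_{\la^{(k)}}$ by permuting tensor factors, and this action is exactly $P_1P_3\dots P_{2c-1}$; since it commutes with $\gl_n$, its restriction to $M_{\bs\la,\:\mu}$ is $P_{\bs\la,\:\mu,\:c}$, and $\tr_{M_{\bs\la,\:\mu}}\si=\chii_{\bs\la,\:\mu}(\si)$ in the notation of Proposition~\ref{char prop}. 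One small bookkeeping point: the tensor product in \Ref{tensor prod} groups equal factors as $L_{\la^{(i)}}^{\ox k_i}$, whereas here the first $2c$ factors may themselves fall into several coincidence classes $\la^{(1)}=\la^{(2)}$, $\la^{(3)}=\la^{(4)}$, etc.; but Proposition~\ref{char prop} is really about the cycle type of $\si$ within each block of coinciding representations, and $\si$ restricted to each such block is a product of transpositions (cycle length $2$) together with fixed points (cycle length $1$). So the formula of Proposition~\ref{char prop} gives $\chii_{\bs\la,\:\mu}(\si)$ as the coefficient of $x_1^{\mu_1+n-1}\dots x_n^{\mu_n}$ in $\Dl_n\cdot\prod$ of Schur polynomials, one factor $S_{\la^{(i)}}(x_1^{l}\lc x_n^{l})$ for each cycle of length $l$ in $\si$. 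The $c$ transpositions contribute $\prod_{i=1}^c S_{\la^{(2i)}}(x_1^2\lc x_n^2)$ and the $k-2c$ fixed points contribute $\prod_{j=2c+1}^k S_{\la^{(j)}}(x_1\lc x_n)$, which is exactly the claimed polynomial.

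The only genuine content beyond invoking Proposition~\ref{char prop} is the linear-algebra lemma "signature $=\tr P$" for $P$ a selfadjoint involution on a positive-definite Hermitian space, so I would state and prove that cleanly first; it follows from simultaneous diagonalization of the positive form and the commuting selfadjoint $P$. I expect this to be routine; the main place to be careful is the combinatorial identification of the cycle structure of $\si$ with the product structure in Proposition~\ref{char prop} when several of the paired partitions among $\la^{(1)}\lc\la^{(2c)}$ happen to coincide with each other or with some of $\la^{(2c+1)}\lc\la^{(k)}$ — but since the Schur-polynomial bookkeeping only depends on which representation sits in each tensor slot and on the cycle lengths, not on how the blocks are labelled, this causes no difficulty, and the stated polynomial is correct as written.
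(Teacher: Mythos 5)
Your proposal is correct and matches the paper's proof exactly: the paper likewise observes that $P_{\bs\la,\:\mu,\:c}^2=1$ implies $q(\bs\la,\mu,c)=\tr_{M_{\bs\la,\:\mu}}P_{\bs\la,\:\mu,\:c}$ and then invokes Proposition~\ref{char prop}. The only difference is that you spell out the elementary "signature equals trace of a selfadjoint involution" argument and the bookkeeping of cycle types, which the paper leaves implicit.
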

\begin{proof}
Since \,$P_{\bs\la,\:\mu,\:c}^2\:=1$, we have
\,$q(\bs\la,\mu,c)=\tr_{M_{\bs\la,\:\mu}}\!P_{\bs\la,\:\mu,\:c}$,
\,and the claim follows from Prop\-osition~\ref{char prop}.
\end{proof}

\begin{thm}
\label{sgn}
We have \,$d(\bs\la,\nu,\bs z)\ge|\>q(\bs\la,\mu,c)\:|$\,, \,where \>$\mu$ is
the complement of \>$\nu$ in the $n\times (d-n)$ rectangle,
\,$\mu\:=\:(d-n-\nu_n\>,\>d-n-\nu_{n-1}\>\lc\>d-n-\nu_1)$\>, cf.~\Ref{mu nu}.
\end{thm}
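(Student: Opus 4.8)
The plan is to put the indefinite Hermitian form $(\cdot,\cdot)_{\bs\la,\:\mu,\:c}$ on $M_{\bs\la,\:\mu}$, to show that the higher Gaudin Hamiltonians $B_{ij}(\bs\la,\mu,\bs z)$ acting on $M_{\bs\la,\:\mu}$ are selfadjoint with respect to it, and then to feed this into Corollary \ref{linalg2} and the Gaudin--Schubert dictionary of Theorem \ref{langlands}.

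\emph{Step 1: selfadjointness of the Bethe algebra for the new form.} Write $B=B_{ij}(\bs\la,\mu,\bs z)$, $P=P_{\bs\la,\:\mu,\:c}=P_1P_3\dots P_{2c-1}$, and let $B^\dagger$ be the adjoint of $B$ with respect to the positive-definite form $(\cdot,\cdot)_{\bs\la,\:\mu}$. The flips $P_1,P_3,\dots,P_{2c-1}$ pairwise commute, so applying Lemma \ref{perm bethe} to them one after another gives $P\,b(\bs\la,\mu,\bs z)\,P=b(\bs\la,\mu,\bs z^{\ast})$ for every $b\in\Bc_n$, where $\bs z^{\ast}$ is obtained from $\bs z$ by the transpositions $(z_1\,z_2),(z_3\,z_4),\dots,(z_{2c-1}\,z_{2c})$; by the conjugation hypotheses $z_1=\bar z_2,\dots,z_{2c-1}=\bar z_{2c}$ and $z_{2c+1},\dots,z_k\in\R$ we have $\bs z^{\ast}=\zbb$. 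In particular $PBP=B_{ij}(\bs\la,\mu,\zbb)$, which by Proposition \ref{sym bethe} equals $B^\dagger$. Hence $PB=B^\dagger P$, and for all $v,w\in M_{\bs\la,\:\mu}$,
\[
(Bv,w)_{\bs\la,\:\mu,\:c}=(PBv,w)_{\bs\la,\:\mu}=(B^\dagger Pv,w)_{\bs\la,\:\mu}=(Pv,Bw)_{\bs\la,\:\mu}=(v,Bw)_{\bs\la,\:\mu,\:c}.
\]
Thus every $B_{ij}(\bs\la,\mu,\bs z)$ is selfadjoint for $(\cdot,\cdot)_{\bs\la,\:\mu,\:c}$; since these operators commute (Proposition \ref{bethe}), the real unital subalgebra $\Ae\subset\End(M_{\bs\la,\:\mu})$ that they generate is commutative and consists of $(\cdot,\cdot)_{\bs\la,\:\mu,\:c}$-selfadjoint operators. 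The form $(\cdot,\cdot)_{\bs\la,\:\mu,\:c}$ is nondegenerate (because $P$ is invertible and $(\cdot,\cdot)_{\bs\la,\:\mu}$ is positive definite) with signature $q(\bs\la,\mu,c)$ by Proposition \ref{charsign}. So by Corollary \ref{linalg2} the subspace $R=\bigcap_{A\in\Ae}\bigoplus_{\al\in\R}M_A(\al)$ carries a nondegenerate restriction of $(\cdot,\cdot)_{\bs\la,\:\mu,\:c}$ of signature $q(\bs\la,\mu,c)$, so $\dim R\ge|\,q(\bs\la,\mu,c)\,|$.

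\emph{Step 2: identifying $\dim R$ with the number of real solutions.} Decompose $M_{\bs\la,\:\mu}=\bigoplus_{\rho}M(\rho)$ into generalized joint eigenspaces of the (Artinian) commutative algebra $\Bc_n(\bs\la,\mu,\bs z)$, indexed by its characters $\rho$. Each $A\in\Ae$ preserves $M(\rho)$ and acts there with the single generalized eigenvalue $\rho(A)$, so $\bigoplus_{\al\in\R}M_A(\al)=\bigoplus_{\rho\,:\,\rho(A)\in\R}M(\rho)$; since $\Ae$ is generated over $\R$ by the $B_{ij}(\bs\la,\mu,\bs z)$, intersecting over $A\in\Ae$ gives $R=\bigoplus M(\rho)$, the sum over those $\rho$ with $\rho\bigl(B_{ij}(\bs\la,\mu,\bs z)\bigr)\in\R$ for all $i,j$. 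By Theorem \ref{langlands}, for an eigenvector $v\in M(\rho)$ the space $\tau(v)$ is a point of $\Om(\bs\la,\nu,\bs z)$ and equals the kernel of the monic order-$n$ differential operator $\De_v$, whose coefficients are exactly the numbers $\rho\bigl(B_{ij}(\bs\la,\mu,\bs z)\bigr)$. As $\De_v$ is the unique monic order-$n$ operator with kernel $\tau(v)$, it has real coefficients if and only if its polynomial solution space $\tau(v)$ is stable under complex conjugation, i.e. if and only if $\tau(v)$ is a real point of $\Gr(n,d)$. Hence $R=\bigoplus_{\tau(v)\ \mathrm{real}}M(\rho)$. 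Finally, $M_{\bs\la,\:\mu}$ is a cyclic $\Bc_n(\bs\la,\mu,\bs z)$-module (the Remark in Section \ref{gaudin sec}), so $\dim M(\rho)$ equals the multiplicity of the point $\tau(v)$ in $\Om(\bs\la,\nu,\bs z)$; summing over the real points gives $\dim R=d(\bs\la,\nu,\bs z)$, which combined with Step 1 yields $d(\bs\la,\nu,\bs z)\ge|\,q(\bs\la,\mu,c)\,|$.

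\emph{Where the difficulty lies.} Step 1 is a direct combination of Proposition \ref{sym bethe}, Lemma \ref{perm bethe}, Proposition \ref{bethe}, Proposition \ref{charsign} and Corollary \ref{linalg2}, with no essential obstacle. The delicate point is Step 2 at non-generic $\bs z$: one needs not merely the set-theoretic bijection $\tau$ but that it matches scheme structures, equivalently that $M_{\bs\la,\:\mu}$ is the regular representation of $\Bc_n(\bs\la,\mu,\bs z)$, so that generalized-eigenspace dimensions equal the geometric multiplicities counted by $d(\bs\la,\nu,\bs z)$. For generic conjugation-invariant $\bs z$ the joint spectrum is simple and the intersection transversal, and the argument collapses to counting real joint eigenvectors; the general statement rests on the results of \cite{MTV4}.
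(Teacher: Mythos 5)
Your Step~1 is essentially identical to the paper's: it combines Proposition~\ref{sym bethe} with Lemma~\ref{perm bethe} to get selfadjointness of the Bethe operators with respect to $(\cdot,\cdot)_{\bs\la,\:\mu,\:c}$, then applies Corollary~\ref{linalg2} to conclude that $R$ has dimension at least $|\:q(\bs\la,\mu,c)\:|$. The two proofs diverge in how they get from $\dim R$ to $d(\bs\la,\nu,\bs z)$. The paper's own route is shorter and more conservative: it invokes Theorem~\ref{langlands} to say that for \emph{generic} conjugation\-/invariant $\bs z$ the Bethe operators are diagonalizable with simple joint spectrum, so $R$ becomes a genuine sum of one\-/dimensional joint eigenspaces whose eigenvalues are real, each contributing a distinct real point of $\Om(\bs\la,\nu,\bs z)$; the bound for arbitrary $\bs z$ then follows by a degeneration argument (real points at generic $\bs z$ limit onto real points counted with multiplicity at non\-/generic $\bs z$). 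Your Step~2, by contrast, works directly at a fixed $\bs z$: you decompose $M_{\bs\la,\:\mu}$ into generalized joint eigenspaces $M(\rho)$, use that $M_{\bs\la,\:\mu}$ is the regular representation of $\Bc_n(\bs\la,\mu,\bs z)$ (the Remark in Section~\ref{gaudin sec}) to read off $\dim M(\rho)$ as a local multiplicity, and then assert this matches the scheme\-/theoretic intersection multiplicity at the corresponding point of $\Om(\bs\la,\nu,\bs z)$.

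That last identification is genuinely stronger than what Theorem~\ref{langlands} as stated in the paper provides: the theorem records only a set\-/theoretic bijection and genericity of diagonalizability, not that $\Bc_n(\bs\la,\mu,\bs z)$ is the coordinate ring of the intersection scheme. You flag this yourself at the end, correctly, and the needed statement is indeed in \cite{MTV4}; but as written the step ``so $\dim M(\rho)$ equals the multiplicity of $\tau(v)$'' leans on an unquoted result. The paper's genericity\-/plus\-/degeneration argument avoids having to invoke the scheme isomorphism at all: diagonalizability for generic $\bs z$ trivializes the Jordan structure, and semicontinuity of the multiplicity\-/weighted real count handles the rest. Your version is cleaner conceptually (no passage through generic $\bs z$) and explains \emph{why} the equality $\dim R=d(\bs\la,\nu,\bs z)$ holds at every $\bs z$, but it buys that transparency at the cost of citing the scheme\-/theoretic form of the MTV correspondence rather than only the facts recorded in the paper's Theorem~\ref{langlands} and the Remark.
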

\begin{proof}
By Proposition \ref{sym bethe} and Lemma \ref{perm bethe}, the operators
\>$B_{ij}(\bs\la,\mu,\bs z)\in\End(M_{\bs\la,\:\mu})$ are selfadjoint relative
to the form \,$(\cdot,\cdot)_{\bs\la,\:\mu}^P$. By Corollary \ref{linalg2},
\be
\dim\Bigl(\,\bigcap_{i,j}\,\bigoplus_{\al\in\R}\,
M_{B_{ij}(\bs\la,\mu,\bs z)}(\al)\Bigr)\ge|\>q(\bs\la,\mu,c)\:|\,.
\vv-.1>
\ee
By Theorem \ref{langlands}, for any \,$\bs\la,\nu$ \>and generic complex
\,$\bs z$ the operators \>$B_{ij}(\bs\la,\mu,\bs z)$ \>are diagonalizable.
The same holds true under the reality condition on \,$\bs z,\bs\la$ \>imposed
in this section for any \>$c$\>. Thus for generic \,$\bs z$, the operators
\>$B_{ij}(\bs\la,\mu,\bs z)$ have at least \,$|\>q(\bs\la,\mu,c)\:|$ \,common
eigenvectors with real eigenvalues, which provides \,$|\>q(\bs\la,\mu,c)\:|$
distinct real points in \,$\Om(\bs\la,\nu,\bs z)$. Hence,
\,$d(\bs\la,\nu,\bs z)\ge|\>q(\bs\la,\mu,c)\:|$ \,for generic \,$\bs z$,
and therefore, for any \,$\bs z$\>, due to counting with multiplicities.
\end{proof}

\begin{cor}
\label{bound thm}
We have \,$d(\bs\la,\nu,\bs z)\geq |\>a(\bs\la,\nu,c)\:|$\>,
where \,$a(\bs\la,\nu, c)$ is the coefficient of the monomial
$x_1^{d-1-\nu_n} x_2^{d-2-\nu_{n-1}}\dots x_n^{d-n-\nu_1}$ in the polynomial
\be
\Dl_n\cdot\>\prod_{i=1}^c\,S_{\la^{(2i)}}(x_1^2\?\lc x_n^2)
\prod_{j=2c+1}^k\!S_{\la^{(j)}}(x_1\lc x_n)\,.
\vv.2>
\ee
Here \,$\Dl_n$ is the Vandermonde determinant \Ref{Delta} and
\,$S_\la$ are Schur polynomials \Ref{Schur}.
\end{cor}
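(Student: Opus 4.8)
The plan is to deduce Corollary~\ref{bound thm} from Theorem~\ref{sgn} by matching the two monomial-extraction statements. By Theorem~\ref{sgn} we already know \,$d(\bs\la,\nu,\bs z)\ge|\>q(\bs\la,\mu,c)\:|$\,, where $\mu$ is the complement of $\nu$ in the $n\times(d-n)$ rectangle, and by Proposition~\ref{charsign} the quantity $q(\bs\la,\mu,c)$ is the coefficient of the monomial $x_1^{\:\mu_1+n-1}x_2^{\:\mu_2+n-2}\?\dots x_n^{\:\mu_n}$ in the polynomial
\be
\Dl_n\cdot\>\prod_{i=1}^c\,S_{\la^{(2i)}}(x_1^2\?\lc x_n^2)
\prod_{j=2c+1}^k\!S_{\la^{(j)}}(x_1\lc x_n)\,.
\ee
So the only thing to check is that this coefficient is exactly $a(\bs\la,\nu,c)$ as defined in the corollary; that is, that the exponent vector $(\:\mu_1+n-1,\mu_2+n-2\lc\mu_n)$ coincides with the exponent vector $(d-1-\nu_n,\:d-2-\nu_{n-1}\lc d-n-\nu_1)$.

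This is an immediate substitution. First I would recall from \Ref{mu nu} that $\mu_i=d-n-\nu_{n+1-i}$ for $i=1\lc n$. Then for each $i=1\lc n$,
\be
\mu_i+n-i\,=\,(d-n-\nu_{n+1-i})+n-i\,=\,d-i-\nu_{n+1-i}\,,
\ee
which for $i=1$ gives $d-1-\nu_n$, for $i=2$ gives $d-2-\nu_{n-1}$, and in general matches the $i$-th exponent in the monomial $x_1^{d-1-\nu_n}x_2^{d-2-\nu_{n-1}}\dots x_n^{d-n-\nu_1}$. Hence the two monomials are literally the same monomial, so $a(\bs\la,\nu,c)=q(\bs\la,\mu,c)$, and the bound from Theorem~\ref{sgn} becomes $d(\bs\la,\nu,\bs z)\ge|\>a(\bs\la,\nu,c)\:|$.

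There is essentially no obstacle here: the corollary is just a bookkeeping restatement of Theorem~\ref{sgn} with $\mu$ eliminated in favour of $\nu$ via the complementation rule, so the polynomial in the corollary is verbatim the polynomial in Proposition~\ref{charsign}. The only point requiring a word of care is that the condition \Ref{nu} on $\nu$ is equivalent to the condition \Ref{weight cond}/\Ref{mula} on $\mu$ — indeed $|\mu|=n(d-n)-|\nu|=\sum_{i=1}^k|\la^{(i)}|$ — which guarantees that $\mu$ is a genuine partition with at most $n$ parts whenever $\nu$ is, so that Proposition~\ref{charsign} and Theorem~\ref{sgn} apply with this $\mu$. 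With that observed, the proof is a two-line computation.

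\begin{proof}
Let \,$\mu$ \>be the complement of \,$\nu$ \>in the $n\times(d-n)$ rectangle,
so that \,$\mu_i=d-n-\nu_{n+1-i}$ \,for \,$i=1\lc n$, \>see \Ref{mu nu}.
Condition \Ref{nu} gives \,$|\:\mu\:|=n(d-n)-|\nu|=\sum_{i=1}^k|\la^{(i)}|$\>,
so \,$\mu$ \>is a partition with at most $n$ parts satisfying \Ref{mula}.
For each \,$i=1\lc n$ \,we have
\be
\mu_i+n-i\,=\,d-n-\nu_{n+1-i}+n-i\,=\,d-i-\nu_{n+1-i}\,,
\ee
hence the monomial \,$x_1^{\:\mu_1+n-1}x_2^{\:\mu_2+n-2}\?\dots x_n^{\:\mu_n}$
\,coincides with \,$x_1^{d-1-\nu_n}x_2^{d-2-\nu_{n-1}}\dots x_n^{d-n-\nu_1}$\>.
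Therefore, by Proposition \ref{charsign}, \,$q(\bs\la,\mu,c)=a(\bs\la,\nu,c)$\>,
and the claim follows from Theorem \ref{sgn}.
\end{proof}
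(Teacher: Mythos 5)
Your proof is correct and follows exactly the route the paper takes: the paper's proof of Corollary~\ref{bound thm} is the single line ``The claim follows from Theorem \ref{sgn} and Proposition \ref{charsign},'' and you simply spell out the elementary exponent bookkeeping ($\mu_i+n-i=d-i-\nu_{n+1-i}$) that the paper leaves implicit.
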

\begin{proof}
The claim follows from Theorem \ref{sgn} and Proposition \ref{charsign}.
\end{proof}

Recall that the total number of points in \,$\Om(\bs\la,\nu,\bs z)$ equals
\,$\dim\:M_{\bs\la,\:\mu}\:=q(\bs\la,\mu,0)$. So if all points \,$z_1\lc z_k$
are real, Theorem \ref{sgn} claims that all points in
\,$\Om(\bs\la,\nu,\bs z)$ are real. It is proved in \cite{MTV4} that for
real \,$z_1\lc z_k$ all points in \,$\Om(\bs\la,\nu,\bs z)$ are real and
multiplicity-free. The proof of Theorem \ref{sgn} here is a modification
of the reasoning used in \cite{MTV4}.

\vsk.2>
Let \,$\labt$ \>be the \,$(k+1)$-tuple \,$\la^{(1)}\lc\la^{(k)},\:\nu$
\,and \,$\dl=(d-n\lc d-n)$ be the rectangular partition with \>$n$ \>rows.
There is a natural isomorphism of the multiplicity spaces \,$M_{\bs\la,\:\mu}$
and \,$M_{\labt,\:\dl}$ that is consistent with the forms
\,$(\cdot,\cdot)_{\bs\la,\:\mu}$ and \,$(\cdot,\cdot)_{\labt,\:\dl}$
and intertwines the operators \,$P_{\bs\la,\:\mu,\:c}$ \>and
\,$P_{\labt,\:\dl,\:c}$\>. Therefore, \,$q(\bs\la,\mu,c)=q(\labt,\dl,c)$
\,and \,$a(\bs\la,\nu,c)=a(\labt,\emptyset,c)$\>, where
\,$\emptyset=(0\lc 0)$ \>is the empty partition.

\vsk.1>
The corresponding statement in the osculating Schubert calculus is as follows.
Let \>$F$ be a M\"obius transformation mapping the real line to the real line
and such that \,$\infty\not\in\{F(z_1)\lc F(z_k),F(\infty)\:\}$.
Set \>$\zbti=\bigl(F(z_1)\lc F(z_k),F(\infty)\bigr)$. Then \>$F$ defines an
isomorphism of \,$\Om(\bs\la,\nu,\bs z)$ \>and \,$\Om(\labt,\emptyset,\zbti)$
that maps real points to real points, and
\,$d(\bs\la,\nu,\bs z)=d(\labt,\emptyset,\zbti)$\>.

\vsk.2>
Consider the transposed partitions \,$(\la^{(1)})'\?\lc(\la^{(k)})'\<,\nu'$,
and treat them as partitions with at most \,$d-n$ \>parts, adding extra zero
parts if necessary. Denote by \,$\bs\la'$ be the \,$k$-tuple
\,$(\la^{(1)})'\?\lc(\la^{(k)})'$. By the Lagrangian involution
for the osculating Schubert problems, see Section~4 of \cite{HSZ},
the intersections of Schubert varieties
\,$\Om(\bs\la,\:\nu,\,\bs z)\subset\Gr(n,d)$ \>and
\,$\Om(\bs\la'\?,\:\nu'\!,\,\bs z)\subset\Gr(d-n,d)$ are isomorphic by taking
the orthogonal complements in $\C_d[x]$ relative to the following bilinear
form: \,$\bra x^p\</p\:!\>,x^q\</q\:!\>\ket=(-1)^p\:\dl_{p+q,\:d-1}$\>,
\,$p=0\lc d-1$\>. In particular, \,$d(\bs\la,\nu,c)=d(\bs\la'\?,\nu'\!,c)$\>.

\vsk.2>
On the other hand, define the multiplicity space \,$M_{\bs\la'\!\<,\:\mu'}$
using the Lie algebra \,$\gl_{d-n}$. There is a natural isomorphism of the
spaces \,$M_{\bs\la,\:\mu}$ and \,$M_{\bs\la'\!\<,\:\mu'}$ that is consistent
with the forms \,$(\cdot,\cdot)_{\bs\la,\:\mu}$ and
\,$(\cdot,\cdot)_{\bs\la'\!\<,\:\mu'}$ and intertwines the operators
\,$P_{\bs\la,\:\mu,\:c}$ \>and \,$(-1)^m\:P_{\bs\la'\!\<,\:\mu'\?,\:c}$\>,
\vvn.1>
where \,$m=\sum_{i=1}^c|\:\la_{2\:i}|$. Therefore,
\,$q(\bs\la,\mu,c)=(-1)^m\:q(\bs\la'\?,\mu'\?,c)$ \,and
\,$a(\bs\la,\nu,c)=(-1)^m\:a(\bs\la'\?,\nu'\!,c)$\>.

\section{Comparison with the available results and data}
\label{compare sec}
In this section we will compare the lower bound for the number of real
solutions of the osculating Schubert problem provided by Corollary
\ref{bound thm} against other available data.

\vsk.2>
We discuss bounds that are independent of \,$z_1\lc z_k$ \,and say that
a bound is sharp if it is attained for some values of \,$z_1\lc z_k$. We
assume that the set \,$\{z_1\lc z_k\}$ \,is invariant under the complex
conjugation and \>$\la^{(i)}\?=\la^{(j)}$ whenever $z_i=\bar z_j$. The number
of complex conjugate pairs in \,$\{z_1\lc z_k\}$ \>is denoted by \>$c$\>.

To save writing, we will indicate only nonzero parts in partitions and
omit zeros. We call the osculating Schubert problem for the case of
\,$\la^{(1)}\<\lsym=\la^{(k)}\<=(\:1\:)$ and arbitrary $\nu$, the vector
Schubert problem.

The topological degree of a real Wronski map gives a lower bound for the number
of real solutions for the vector Schubert problem. This degree was computed
in \cite{EG2} and extended in \cite{SS} to the case of
\,$\la^{(1)}\<\lsym=\la^{(k-1)}\<=(\:1\:)$ and arbitrary $\la^{(k)}$ and $\nu$.
The result is given in terms of the sign\:-imbalance of the skew Young diagram
\,$\nu/\la^{(k)}$.
In the case \,$\la^{(k)}\<=(\:1\:)$ and $\nu=(m,m\lc m)$, where there are \>$p$
nonzero parts and \,$p\le m$\>, the sign\:-imbalance was computed in \cite{W}.
The results is \,$0$ \,for even \>$m+p$ \>and
\beq
\label{white}
\frac{(mp/2)!}{((m+p-1)/2)!}\;
\prod_{i=1}^{p-1}\,\frac{i!(m-i)!}{(m-p+2i)!((m-p-1)/2+i)!}
\eeq
for odd \,$m+p$\>. Unlike Corollary \ref{bound thm}, this bound
is independent on the number of complex conjugated pairs among \,$z_1\lc z_k$.

\vsk.2>
This bound is found to be not sharp for the case \>$m=p=3$, when \>$k=9$
\>and the problem is for $\Gr(3,6)$, in \cite{HSZ}. It is proved there that
the problem has at least two real solutions. For this case, Corollary
\ref{bound thm} gives lower bounds \>$a=42,0,2,0,6$ \>for \>$c=0,1,2,3,4$
respectively. Thus our bound is not sharp for $c=1,3$, but, according
to the computer data, see \cite{HS}, it is sharp for $c=0,2,4$.

\vsk.2>
On the other hand for the case of $p=3,m=5$, where \>$k=15$ and the problem is
for $\Gr(3,8)$, the topological bound of \cite{EG2} gives zero, the results of
\cite{HSZ} are not applicable, and Corollary \ref{bound thm} yields
\>$a=6006,858,198,42,6,10,10,70$ \>for \>$c=0,1,2,3,4,5,6,7$, respectively.
In particular, it shows that the real Wronski map
$\Gr^{\R}(3,8)\to\R{\Bbb P}^{15}$, which sends three\:-dimensional subspaces
of $\R_8[x]$ to their Wronski determinants, is surjective; see \cite{EG3}
for discussion of surjectivity of real Wronski maps.

\vsk.2>
In another example, \>$p=3\,,\;m=6$\>, that is, \>$k=18$, $\Gr(3,9)$\>,
the topological bound \Ref{white} is \,$12$\>, and Corollary \ref{bound thm}
gives: $a=87516,15444,3432,792,180,60,0,0,140,420$ \>for \,$c=0\lc 9$\>,
respectively. Thus the topological bound is better for $c=6,7$\>, while
Corollary \ref{bound thm} wins in the other cases.

\vsk.2>
For the case \>$p=2\,,\;c=m-1$\>, that is, \>$k=2\:m$, $\Gr(2,m)$\>,
the bounds of \Ref{white} and Corollary \ref{bound thm} coincide: both equal
zero for even \>$m$ \>and \,$(2s)\:!/(s\:!(s+1)\:!)$ \,for odd \>$m=2s-1$.
The bounds are known to be sharp in this case.

\vsk.2>
A large amount of computer generated data is available at \cite{HS},
so we have tested our bound against them. The bound given by Corollary
\ref{bound thm} coincides with the computer prediction in amazingly many cases.
For example, out of eleven computer generated tables presented in \cite{HHS},
the bound given by Corollary \ref{bound thm} is sharp in all cases except for
the second row of Table~5 corresponding to the vector Schubert problem with
\>$k=7$, \,$\nu=(3,3,3)$, \>for \,$\Gr(4,8)$. In this case, Corollary
\ref{bound thm} gives the bounds \>$a=20,0,4,0$ \>for \>$c=0,1,2,3$\>,
and the computer data are \,$20,{\bf 8},4,0$, indicating a possible deficiency
for \>$c=1$.

\vsk.1>
Also, for the case of \>$n=2$, there are sixty computer generated bounds with
nineteen of them being nonzero. All of them match the bounds given by Corollary
\ref{bound thm}.

\vsk.2>
Call the osculating Schubert problem symmetric if \,$\la^{(i)}\<=(\la^{(i)})'$
for all \,$i=1\lc k$\>, and \>$\nu=\nu'$. In this case, the numbers of real
solutions for different \>$c$ \>are often congruent modulo four, see
\cite{HSZ}. Since the number of real solutions for \>$c=0$ \>is known, it gives
under some additional assumptions a lower bound of two for the number of real
solutions whenever the number of complex solutions is not divisible by four.
It seems that many, though not all, discrepancies we found between the bound
given by Corollary \ref{bound thm} and the computer data happen in symmetric
problems. For example, the remark at the end of Section \ref{bound sec} shows
that \,$a(\bs\la,\nu,c)=0$ \>for the symmetric Schubert problem
if \,$\sum_{i=1}^c|\:\la_{2\:i}|$ \>is odd, but in some of those cases
the zero bound is not sharp according to the computer generated data.

\vsk.2>
Finally, consider the vector Schubert problem with \,$\nu=(k-n\lc k-n)$
\>having \>$n-1$ \>nonzero parts, for the Grassmannian \,$\Gr(n,k+1)$\,.
The number of real solutions of this problem for given $z_1,\dots,z_k$ has been found in \cite{HHS}
and is given by the coefficient \,$r(k,n,s)$ \>of the monomial $x^{k-n}y^{n-1}$
in the polynomial $(x+y)^{k-1-2s}(x^2+y^2)^s$, where \,$k-1-2s$ \>is the number
\vvn.1>
of real roots of the polynomial \,$g(u)=\frac d{du}\:\prod_{i=1}^k(u-z_i)$\>.

\vsk.1>
It is easy to check that \,$r(k,n,s-1)\ge r(k,n,s)$ \,if \,$1\le s<k/2$\>.
By Rolle's theorem, \,$s\le c$ \,if \>$2c<k$\>, and \,$s\le c-1$
\,if \>$2c=k$. Thus either \,$r(k,n,c)$ \>or \,$r(k,n,c-1)$ \,gives the lower
bound for the number of real solutions of the Schubert problem in question,
depending on whether \,$2c<k$ \>or \,$2c=k$. These lower bounds are sharp
because the equalities \,$s=c$ \,for \>$2c<k$ \>and \,$s=c-1$ \,for
\>$2c=k$ \,are attained as the following examples show.

\begin{example}
$s=c\,,\;2c<k$\>. For sufficiently small real \,$\eps$\>, the polynomial
\vvn.2>
\be
\prod_{i=1}^c\,(u^2\?+1-\eps^i)\,\prod_{j=1}^{k-2c}\,(u-\eps^j)
\vv.1>
\ee
has exactly \,$k-2c$ \>real roots and its derivative has exactly \,$k-1-2c$
\>real roots.
\end{example}
\begin{example}
$s=c-1\,,\;2c=k$\>. The polynomial \,\:$(x^2+1)^c$ \,has no real roots and
its derivative has exactly one real root.
\end{example}

For $n=3\,,\;k=14$\>, and \>$c=0\lc 7$\>, the sharp lower bounds respectively
equal \,$78,56,38,24,14,8,6,6$\>, while the bounds given by Corollary
\ref{bound thm} are \,$78,54,34,18,6,2,6,6$. Similarly,
for $n=4\,,\;k=11$\>, and \>$c=0\lc 5$\>, the sharp lower bounds are
\,$120,64,32,16,8,0$ \,versus the bounds \,$120,48,8,8,8,0$ \,given
by Corollary \ref{bound thm}.

\vsk>
\section*{Acknowledgments}
We are grateful to A.\,Eremenko for pointing out the reference \cite{P}.

\end{document}